\documentclass[11pt]{article}
\usepackage{tikz}
\usepackage{graphicx}
\usepackage{amsthm, amsmath, amssymb, tikz, bm}
\usepackage{mathtools}
\usepackage{xifthen}
\usepackage{caption}
\usepackage{comment}
\usepackage{tablefootnote}
\usepackage{array}
\usepackage{diagbox}
\usepackage[colorlinks=true,
linkcolor=blue,citecolor=blue,
urlcolor=blue]{hyperref}
\usepackage[margin=0.8in]{geometry} 

\usepackage[shortlabels]{enumitem}
\usepackage{todonotes}
\usepackage{stackengine}
\newcommand\xrowht[2][0]{\addstackgap[.5\dimexpr#2\relax]{\vphantom{#1}}}

\usetikzlibrary{calc,shapes, backgrounds}
\allowdisplaybreaks

\newtheorem{theorem}{Theorem}
\newtheorem{lemma}[theorem]{Lemma}
\newtheorem{corollary}[theorem]{Corollary}

\newtheorem{remark}{Remark}
\newtheorem{conjecture}[theorem]{Conjecture}
\newtheorem{proposition}[theorem]{Proposition}

\newcommand{\tr}{{\rm tr}}

\newcolumntype{L}[1]{>{\raggedright\arraybackslash}p{#1}}
\newcolumntype{C}[1]{>{\centering\arraybackslash}p{#1}}
\newcolumntype{R}[1]{>{\raggedleft\arraybackslash}p{#1}}
\newcommand{\mt}[2]{\begin{tabular}{@{}L{1.2cm}R{1.2cm}@{}}
&#2\\
#1&
\end{tabular}}

\newcommand{\spread}{{\rm spread}}

\title{Generalized Nordhaus--Gaddum Inequalities for Eigenvalues
\thanks{University of South Carolina, Columbia, SC 29208. This is an REU research project supported by NSF grant DMS2038080.}
}
\author{
Sahil Agarwal
\and
Carter Antley
\and
Joseph Aulenbacher
\and
George Brooks
\and
Ian Gonzalez
\and
Luke Hawranick
\and
William Linz
\and
Linyuan Lu
\and
Aiden Williams
}

\date{}

\begin{document}

\maketitle

\begin{abstract}
For a graph $G$, let
$
\lambda_1(G)\ge \lambda_2(G)\ge \cdots \ge \lambda_n(G)$ 
denote the adjacency eigenvalues of $G$.
We investigate the asymptotic maximum of
\[
\lambda_i(G)+\lambda_j(\overline G)
\]
for fixed $i$ and $j$. We prove general bounds on $\lambda_i(G) + \lambda_{j}(\overline{G})$ for all pairs $(i, j)$ and also give general bounds on the related problem of minimizing $\lambda_{n-i+1}(G) + \lambda_{n-j+1}(\overline{G})$ for fixed $i$ and $j$.  We prove that for all looped graphs $G$ on $n$ vertices,
\[\lambda_1(G) + \lambda_2(\overline{G}) \le \frac87 n. \]
Our method also gives a new short proof of the Nordhaus-Gaddum result for the spectral radius proved by Terpai that $\lambda_1(G) + \lambda_1(\overline{G}) \le \frac43n - 1$. 
We also show the close relation of these Nordhaus-Gaddum type problems to  recent work on the maximum spectral gaps of graphs by Brooks, Linz and Lu. 
\end{abstract}

\section{Introduction}

The study of Nordhaus--Gaddum type inequalities seeks to relate
graph parameters of a graph and its complement. The original inequalities of Nordhaus and Gaddum~\cite{NordhausGaddum1956} gave tight lower and upper bounds on the sum and product of the chromatic number of a graph and its complement. Nordhaus--Gaddum type inequalities have since been studied for hundreds of graph parameters; we refer to the comprehensive survey of Aouchiche and Hansen~\cite{AouchicheHansen2013}. 

We denote the eigenvalues of the adjacency matrix of a graph $G$ on $n$ vertices by $\lambda_1 \ge \lambda_2 \ge \ldots \ge \lambda_n$. For adjacency eigenvalues, a natural Nordhaus-Gaddum type problem is to determine
\[
\max_G \bigl(\lambda_k(G)+\lambda_k(\overline G)\bigr).
\]
This question was introduced by Nikiforov~\cite{Nikiforov2007}. Nikiforov and Yuan~\cite{NikiforovYuan2014} determined this maximum for many values of $k$. Their results were subsequently extended by Nikiforov~\cite{Nikiforov2015}. 

The case $k=1$ has received considerable attention. Nosal~\cite{Nosal} proved that $n-1 \le \lambda_1(G) + \lambda_1(\overline{G}) \le \sqrt{2}n$ for every graph $G$ on $n$ vertices. Nikiforov~\cite{Nikiforov2007} improved the upper bound to $\lambda_1(G) + \lambda_1(\overline{G}) \le (\sqrt{2} - 10^{-7})n$ and further conjectured that $\lambda_1(G) + \lambda_1(\overline{G}) \le \frac43n + O(1)$. Csikv\'ari~\cite{Csikvari2009} improved the bound to $\lambda_1(G) + \lambda_1(\overline{G}) \le \left(\frac{1+\sqrt{3}}{2}\right)n - 1$. Terpai~\cite{Terpai2011} subsequently affirmed Nikiforov's conjecture by proving that $\lambda_1(G) + \lambda_1(\overline{G}) \le \frac43 n - 1$ by using graphons. Liu~\cite{Liu2023} recently determined the precise extremal graphs for this problem by showing that for sufficiently large $n$ 
the maximum of
$\lambda_1(G)+\lambda_1(\overline G)$
is attained by a complete split graph $CS_{n,\lfloor n/3\rfloor}$. Here
a complete split graph $CS_{n, \omega}$ is the join of a complete graph $K_{\omega}$ and
an independent set of size $n-\omega$. Subsequently, Cheng and Weng~\cite{ChengWeng}  
determined the exact maximum for every $n$. One of the main results of this paper is that we give a short, self-contained proof of Terpai's result. 

\begin{theorem}\label{thm:alpha11exact}
For any graph $G$ on $n$ vertices, 
\[\lambda_1(G) + \lambda_1(\overline{G}) \le \frac43n - 1. \]
\end{theorem}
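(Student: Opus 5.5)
The plan is to pass to looped graphs, where the identity is simply $J$, and then turn the problem into a kernel inequality on the unit circle. Let $A$ be the adjacency matrix of $G$ and put $M=A+I$, which is $G$ with a loop at every vertex. Then $J-M$ is the adjacency matrix of $\overline G$ (with no loops), and $\lambda_1(M)=\lambda_1(G)+1$. So Theorem~\ref{thm:alpha11exact} follows from the looped statement
\[
\lambda_1(M)+\lambda_1(J-M)\le \tfrac43 n \qquad\text{for every symmetric $0/1$ matrix } M .
\]
This is the form I would actually prove; it is also the form in which the later $\frac87 n$ bound is stated.

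\emph{Step 1 (linearize).} By Perron--Frobenius, take nonnegative unit eigenvectors $x$ for $M$ and $y$ for $J-M$. Then
\[
\lambda_1(M)+\lambda_1(J-M)=x^TMx+y^T(J-M)y=\Big(\sum_i y_i\Big)^2+\sum_{i,j}M_{ij}\,(x_ix_j-y_iy_j).
\]
Now relax $M$ to be arbitrary; the right-hand side is largest when $M_{ij}=1$ exactly when $x_ix_j>y_iy_j$. Hence
\[
\lambda_1(M)+\lambda_1(J-M)\le \sum_{i,j}\Big(y_iy_j+(x_ix_j-y_iy_j)^+\Big).
\]

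\emph{Step 2 (polar coordinates).} Write $x_i=r_i\cos\theta_i$ and $y_i=r_i\sin\theta_i$ with $\theta_i\in[0,\pi/2]$; the normalizations give $\sum_i r_i^2=2$. The bound becomes
\[
\sum_{i,j} r_ir_j\,f(\theta_i,\theta_j),\qquad f(\theta,\varphi)=\sin\theta\sin\varphi+\max\{\cos(\theta+\varphi),0\}.
\]
This sum is then a quadratic form in a kernel on $[0,\pi/2]$, and the task is to bound it.

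\emph{Step 3 (PSD certificate).} I will look for functions $g_1,\dots,g_m$ on $[0,\pi/2]$ and weights $c_k\ge 0$ such that:
\begin{enumerate}[(a)]
\item $f(\theta,\varphi)\le\sum_k c_k\,g_k(\theta)g_k(\varphi)$ for all $\theta,\varphi$;
\item $\sum_k c_k\,g_k(\theta)^2\le \tfrac23$ for all $\theta$.
\end{enumerate}
Given these, Cauchy--Schwarz finishes the argument:
\[
\sum_{i,j}r_ir_jf\le\sum_k c_k\Big(\sum_i r_ig_k(\theta_i)\Big)^2\le n\sum_i r_i^2\sum_k c_kg_k(\theta_i)^2\le n\cdot 2\cdot\tfrac23=\tfrac43 n .
\]

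The naive choice $m=1$, $g\equiv1$ fails, since $f(0,0)=1>\tfrac23$. So genuine cancellation across angles is needed, and the certificate has to be guided by the extremal graph. For $CS_{n,n/3}$ the Perron vectors take only two angle values, one on the clique and one on the independent set. Conditions (a) and (b) must therefore be tight at those two angles, and (a) must be tight on the corresponding pairs.

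\emph{Main obstacle.} The hard part is constructing the certificate, because of the positive part in $f$. My first attempt will use $g$'s built from $1$, $\cos\theta$, $\sin\theta$ and possibly $\cos2\theta,\sin2\theta$. I will majorize $\max\{\cos(\theta+\varphi),0\}$ by a low-rank function tangent to it at the extremal angles. The coefficients would be fixed by solving the tightness equations, and (a) and (b) then verified as one-variable and two-variable trigonometric inequalities.

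If no finite certificate works, the fallback is to avoid the pointwise requirement. I would first note that in an optimum the angles take few values, which can be argued by convexity or local variation of the quadratic form in $(r_i,\theta_i)$. That reduces everything to a small finite-dimensional optimization whose maximum is $\tfrac43 n$.
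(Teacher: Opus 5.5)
\textbf{There is a genuine gap: the certificate in Step~3 cannot exist.} Your reduction via $M=A+I$ is correct; it is how the paper's remark recovers $\frac43n-1$ from the looped bound. Step~1 also loses no value: the supremum of $\sum_{i,j}\max\{x_ix_j,y_iy_j\}$ over unit $x,y\ge0$ equals $\max_M(\lambda_1(M)+\lambda_1(J-M))$.

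The failure is not specific to the naive choice $g\equiv1$; it rules out every choice of $g_k$. On the diagonal, (a) forces $\sum_k c_kg_k(\theta)^2\ge f(\theta,\theta)=\max\{\cos^2\theta,\sin^2\theta\}$. At $\theta=0$ this gives $\sum_kc_kg_k(0)^2\ge1>\frac23$, contradicting (b).

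This is a symptom of a dropped constraint. In Step~2 you kept only $\sum_ir_i^2=2$ and discarded $\|x\|=\|y\|$, i.e.\ $\sum_ir_i^2\cos2\theta_i=0$. Without that constraint the inequality you are trying to certify is false: taking all $\theta_i=0$ and $r_i=\sqrt{2/n}$ gives $2n$.

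Restoring the constraint does not rescue the scheme. The most it lets you use is a bound $\sum_kc_kg_k(\theta)^2\le a\cos^2\theta+b\sin^2\theta$, after which Cauchy--Schwarz returns $(a+b)n$. But (a) at $\theta=0$ and at $\theta=\pi/2$ forces $a\ge1$ and $b\ge1$, so the method can never beat the trivial bound $2n$. This bound is attained by, e.g., $P(\theta,\varphi)=\cos(\theta-\varphi)$. You also cannot exempt the diagonal $\theta=\varphi$ from (a). Distinct vertices share angles; in the extremal blow-up the whole clique class sits at $\theta=0$ and genuinely uses $f(0,0)=1$.

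The fallback (``in an optimum the angles take few values'') is asserted, not argued. Proving it and then solving the reduced problem is essentially the whole difficulty, i.e.\ Terpai's graphon analysis.

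What is missing is the information you discarded by relaxing $M$. When the eigenvalues are large, Perron vectors of $M$ and $J-M$ cannot concentrate, and this is exactly what the paper exploits. Assume $\lambda=\lambda_1(M)>0$ and $\mu=\lambda_1(B)>0$, where $B=J-M$. Let $x\ge0$ be a Perron vector of $M$ with $c=\sum_ix_i$, and $p\ge0$ a Perron vector of $B$ with $P=\sum_ip_i$. Then:
\begin{itemize}
\item $\lambda x=c\mathbf 1-Bx\le c\mathbf 1$, so the deficit $d=\frac c\lambda\mathbf 1-x$ is nonnegative;
\item $\lambda p^Tx=cP-\mu p^Tx$, so $p^Tx=\frac{cP}{\lambda+\mu}$;
\item $\mu p_i=(Bp)_i\le P$.
\end{itemize}
Hence
\[
\frac{nc}{\lambda}-c=\sum_id_i\ge\frac{\mu}{P}\,p^Td=\frac{c\mu^2}{\lambda(\lambda+\mu)},
\]
i.e.\ $(n-\lambda)(\lambda+\mu)\ge\mu^2$. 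With $s=\lambda+\mu$ this rearranges to $ns-\frac34s^2\ge(\mu-\frac s2)^2\ge0$, so $s\le\frac43n$. Combined with your $M=A+I$ reduction, this gives $\lambda_1(G)+\lambda_1(\overline G)\le\frac43n-1$.
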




The work on Nordhaus-Gaddum inequalities for adjacency eigenvalues of a graph is part of a broader program seeking to find the extremal values of linear combinations of graph eigenvalues initiated by Nikiforov~\cite{Nikiforov2006b}. Let $\mathcal G_n$ denote the family of all simple graphs on $n$ vertices. For a graph $G$ on $n$ vertices, let $F(G)$ be any fixed linear combination of $\lambda_i(G)$, $\lambda_{n-i+1}(G)$, $\lambda_{i}(\overline{G})$ and $\lambda_{n-i+1}(\overline{G})$ for $1\le i\le k$.  Nikiforov proved the fundamental result that 
\[\lim_{n\rightarrow \infty} \frac{1}{n} \max\{F(G): G\in \mathcal G_n\}\]
exists. There has been dramatic progress in determining these limits for many specific linear combinations: Breen, Riasnovsky, Tait and Urschel~\cite{BRTU2021+} determined the maximum of the \emph{spread} $\lambda_1 - \lambda_n$, solving a conjecture of Gregory, Hershkowitz and Kirkland~\cite{GHK2001}; the maximum of $\lambda_1 + \lambda_2$ has been determined very recently by Kumar, Liu, Monterde, Pragada and Tait~\cite{KLMPT2026+}, confirming a conjecture of Ebrahimi, Mohar, Nikiforov and Ahmady~\cite{EMNA2008}; and the maximum of the $k$th eigenvalue of graphs has been determined for a number of values of $k$~\cite{Linz2023, LeonidaLi2026, Li2025+, Tang2026+, Sivashankar2026+, Wakhare2026+}. 

Recently, Brooks, Linz, and Lu~\cite{BrooksLinzLu}
investigated maximum spectral gaps
\[
\lambda_{i+1}(G)-\lambda_{n-j}(G). 
\]
They called the quantity $\lambda_{i+1}(G) - \lambda_{n-j}(G)$ the \emph{$(i, j)$-spread} of the graph $G$, denoted by $\spread_{i,j}(G)$. For a fixed number of vertices $n$, they defined  
\[\spread_{i,j}(n)=\max\{\spread_{i,j}(G) \colon G\in \mathcal G_n \}\]
to be the maximum of the $(i, j)$-spread among all simple graphs on $n$ vertices and considered the scaled limit
\[s_{i,j}=\lim_{n\to\infty} \frac{\spread_{i, j}(n)}{n}.\]

Motivated by these developments, in this paper we consider the more general Nordhaus-Gaddum type problem to determine
\[
\max_G \bigl(\lambda_i(G)+\lambda_j(\overline G)\bigr).
\] In order to  study the asymptotic behavior of
$
\lambda_i(G)+\lambda_j(\overline G)
$
for fixed positive integers $i$ and $j$, we define
\[
\alpha_{i,j}
=
\lim_{n\to\infty}
\frac1n
\max_{G\in\mathcal G_n}
\bigl(
\lambda_i(G)+\lambda_j(\overline G)
\bigr)
\]
and
\[\beta_{i,j} = \lim_{n \to\infty}\frac{1}{n}\min_{G\in\mathcal{G}_n}(\lambda_{n-i+1}(G) + \lambda_{n-j+1}(\overline{G})).\]
 
It follows from the aforementioned result of Nikiforov~\cite{Nikiforov2006b} that these limits exist for all $i$  and $j$.

In Section 2, we give some general upper bounds on $\alpha_{i, j}$ for integers $i, j\ge 1$. These bounds follow from Weyl's inequality and results proved by Brooks, Linz and Lu for the $(i-1, j-2)$-spread. 

\begin{theorem}
\label{thm:genijubng}
Let $j\ge 2$ be a positive integer. 
\begin{enumerate}
 \item    For any positive integer $i\geq 2$, for all graphs $G$ on $n$ vertices with at most one loop per vertex, we have
    \[ \lambda_{i}(G)+\lambda_{j}(\overline G)\leq \frac{n}{2}\sqrt{\frac{i+j-2}{(i-1)(j-1)}}. \]
    Thus, 
    \[\alpha_{i,j} \le \frac{1}{2}\sqrt{\frac{i+j-2}{(i-1)(j-1)}}.\]
    
 \item For all graphs $G$ on $n$ vertices with at most one loop per vertex, we have
    \[ \lambda_{1}(G)+\lambda_{j}(\overline G)\leq \frac{n}{2}\left(1+\sqrt{\frac{j}{j-1}}\right). \]
    Thus,
    \[\alpha_{1, j} \le \frac{1}{2}\left(1+\sqrt{\frac{j}{j-1}}\right).\]

\item  For all graphs $G$ on $n$ vertices with at most one loop per vertex, we have
\[\lambda_{k+1}(G) + \lambda_{k+1}(\overline G) \le \frac{n}{\sqrt{2k}}.\]
Equality holds if there is a symmetric Hadamard matrix of order $2k$. In particular, if such a symmetric Hadamard matrix of order $2k$ exists, then we have $\alpha_{k+1,k+1}= \frac{1}{\sqrt{2k}}$.

    \end{enumerate}
\end{theorem}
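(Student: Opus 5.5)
The plan is to reduce everything to a single symmetric $\pm\tfrac12$ matrix and then use Weyl's inequality together with a trace (Frobenius norm) bound; this is the same mechanism behind the Brooks--Linz--Lu bound on the $(i-1,j-2)$-spread. Let $A$ be the adjacency matrix of $G$, possibly with loops, and take the complement with the convention that the loops of $\overline G$ are exactly the non-loops of $G$. Then
\[
A(\overline G)=J-A .
\]
Set $M=A-\tfrac12 J$. Every entry of $M$ is $\pm\tfrac12$, so
\[
\sum_{t=1}^n \lambda_t(M)^2=\tr(M^2)=\frac{n^2}{4}.
\]
For a loopless $G$ with simple complement $J-I-A$, we have $J-I-A\preceq J-A$, so all eigenvalues only decrease and the same bounds apply.

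\textbf{Weyl step.} Since $A=M+\tfrac12 J$ and $\tfrac12 J$ is positive semidefinite of rank one, Weyl's inequality gives $\lambda_i(A)\le \lambda_{i-1}(M)$ for $i\ge2$, and $\lambda_1(A)\le \lambda_1(M)+\tfrac n2$. Similarly $J-A=\tfrac12 J-M$, so for $j\ge 2$,
\[
\lambda_j(\overline G)\le \lambda_{j-1}(-M)=-\lambda_{n-j+2}(M).
\]
Put $x=\lambda_{i-1}(M)$ and $y=-\lambda_{n-j+2}(M)$, and assume both are positive (otherwise the bound is easy). Then $M$ has at least $i-1$ eigenvalues $\ge x$ and at least $j-1$ eigenvalues $\le -y$. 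For $n\ge i+j-2$ these index sets are disjoint, and for smaller $n$ the claim is trivial. Hence
\[
(i-1)x^2+(j-1)y^2\le \frac{n^2}{4}.
\]
By Cauchy--Schwarz,
\[
x+y\le \frac n2\sqrt{\frac1{i-1}+\frac1{j-1}}=\frac n2\sqrt{\frac{i+j-2}{(i-1)(j-1)}},
\]
which is part 1. For part 2 the same argument with $x=\lambda_1(M)$ gives $x^2+(j-1)y^2\le n^2/4$. So $x+y\le \frac n2\sqrt{1+\frac1{j-1}}$, and adding the extra $\tfrac n2$ from $\lambda_1(A)\le\lambda_1(M)+\tfrac n2$ gives the stated bound. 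Part 3's inequality is part 1 with $i=j=k+1$, since $\frac12\sqrt{2k/k^2}=1/\sqrt{2k}$. The $\alpha$ bounds then follow by dividing by $n$ and letting $n\to\infty$.

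\textbf{Equality in part 3.} Let $H$ be a symmetric Hadamard matrix of order $2k$, and set $m=n/(2k)$. Take the looped graph whose matrix is $A=\tfrac12(H\otimes J_m+J_n)$, so that $M=\tfrac12 H\otimes J_m$. The nonzero eigenvalues of $M$ are $\tfrac m2\cdot(\pm\sqrt{2k})=\pm n/(2\sqrt{2k})$, with multiplicities $p$ and $q$, where $p+q=2k$. The bound is attained when both Weyl steps are tight and $p=q=k$. This requires first choosing $H$ (after switching signs by diagonal $\pm1$ similarity and permuting) to have trace $0$. It also requires checking that the all-ones vector interacts well with the eigenspaces, ideally lying in an eigenspace of $H$, e.g. $H$ with constant row sums. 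Then $J$ only shifts one eigenvalue, and both $\lambda_{k+1}(A)$ and $\lambda_{k+1}(J-A)$ equal $n/(2\sqrt{2k})$. Finally, one must pass from looped to simple graphs at a cost of $O(1)$, which does not affect $\alpha_{k+1,k+1}$.

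I expect this equality construction to be the main obstacle: arranging a normalization of $H$ under which the rank-one perturbation $\tfrac12 J$ does not destroy the $k$-fold multiplicities on both sides. If a direct normalization fails, I would instead blow up $H$ and exploit interlacing. Adding $\pm\tfrac12 J$ moves each eigenvalue by at most one position, and the multiplicity $\ge k+1$ needed for each sign can be obtained by choosing $H$ with trace zero and eigenvalue multiplicities $k,k$, then counting how many of these eigenvalues survive after removing the all-ones direction.
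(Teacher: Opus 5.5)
Your upper bounds (parts 1, 2 and the inequality in part 3) are correct and follow the paper's route. The paper applies Weyl's inequality against the rank-one matrix $\tfrac12 J$ (Lemma~\ref{lem:s_ub}) and then uses the spread bounds of Theorems~\ref{thm:genijub}, \ref{thm:allk}, \ref{thm:0jthm}; you prove those bounds inline via $\tr(M^2)=n^2/4$.

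The genuine gap is the equality claim in part 3, and it is not just a normalization issue: $A=\tfrac12(H\otimes J_m+J_n)$ never attains $n/\sqrt{2k}$, for any symmetric Hadamard $H$. Put $x=\frac{n}{2\sqrt{2k}}$. Equality in your Weyl and Cauchy--Schwarz steps forces $M$ to have eigenvalues $\pm x$, each with multiplicity exactly $k$, and all others $0$. It also forces $M\mathbf 1=0$, i.e.\ $G$ must be $\tfrac n2$-regular. To see this, write $A=M+\tfrac n2 uu^T$ with $u=\mathbf 1/\sqrt n$. Suppose $u$ has a nonzero component in the $x$-eigenspace of $M$. Then only $k-1$ eigenvalues of $A$ stay at $x$, and the secular equation of the rank-one update has exactly one root above $x$, with all its other roots strictly below $x$. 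Hence $\lambda_{k+1}(A)<x$. Symmetrically, a nonzero component in the $(-x)$-eigenspace gives $\lambda_{k+1}(J-A)<x$.

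In your construction $M\mathbf 1=\tfrac m2 (H\mathbf 1_{2k})\otimes \mathbf 1_m\neq 0$ because $H$ is invertible, so the inequality is strict. For example, $k=1$ and $H=\begin{bmatrix}1&1\\1&-1\end{bmatrix}$ give $A=\begin{bmatrix}J_m&J_m\\J_m&0\end{bmatrix}$, with $\lambda_2(A)=\lambda_2(J-A)=0$ for $m\ge 2$. Making $\mathbf 1$ an eigenvector of $H$ is exactly the bad case: then $A$ has only $p$ eigenvalues $\ge x$ and $J-A$ at most $q+1$, and $p+q=2k$ prevents both counts from reaching $k+1$. Your fallback (multiplicity $\ge k+1$ for each sign) is ruled out by the same trace identity, since $2(k+1)x^2>n^2/4$.

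The missing idea is to place $H$ inside a singular matrix with zero row sums. This is the construction the paper takes from Brooks--Linz--Lu: $A=\tfrac12(K\otimes H+J_{4k})$ with $K=\begin{bmatrix}1&-1\\-1&1\end{bmatrix}$, and its $t$-blowups. Now $(K\otimes H)\mathbf 1=0$, so $G$ is regular and Lemma~\ref{lem:reg} makes the Weyl step exact. The spectrum of $A$ is $2k$, then $\sqrt{2k}$ with multiplicity $p$, then $0$ with multiplicity $2k-1$, then $-\sqrt{2k}$ with multiplicity $q$. The matrix $J-A$ has the same spectrum with $p$ and $q$ swapped. Hence $\lambda_{k+1}(A)+\lambda_{k+1}(J-A)=2\sqrt{2k}=4k/\sqrt{2k}$ when $p=q=k$.

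Also, $p=q$ (that is, $\tr H=0$) cannot be arranged the way you suggest. $DHD$ and $PHP^T$ are similar to $H$ (indeed $DHD$ has the same diagonal), so the trace does not move. The condition is automatic when $2k$ is not a perfect square, since $\tr H=\sqrt{2k}\,(p-q)$ is an integer. Otherwise it is a real condition on the choice of $H$: $J_4-2I_4$ has trace $-4$ and fails, while the Sylvester matrix of order $4$ works. The paper's construction also relies on this condition tacitly.
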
 
The third item of Theorem~\ref{thm:genijubng} was previously proven by Nikiforov~\cite{Nikiforov2015}; we obtain the result as a consequence of our general bounds.  

In Section 3, we give many new results on $\alpha_{1, k}$. Our first main new result is that we are able to obtain the value of $\alpha_{1, 2}$ exactly. 

\begin{theorem}\label{thm:alpha12}
    \[\alpha_{1,2}=\frac{8}{7}.\]
    The maximum is achieved by a blowup of $G= J_3 \vee C_4$. (Here $J_3$
 is the complete graph on $3$ vertices with loops on each vertex). 
 \end{theorem}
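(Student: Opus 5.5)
We prove the two inequalities $\alpha_{1,2}\ge \frac87$ and $\alpha_{1,2}\le\frac87$ separately. Throughout we work with looped graphs, so that $A(G)+A(\overline G)=J$; by the remark after Theorem~\ref{thm:genijubng}, the simple-graph limit agrees with this up to $O(1)$ per eigenvalue.

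\emph{Lower bound.} Take the blowup of $J_3\vee C_4$ in which each of the three looped vertices is replaced by a looped clique of $an$ vertices and each of the four $C_4$ vertices by an independent set of $bn$ vertices, with $3a+4b=1$.
\begin{itemize}[nosep]
\item In $G$, the $C_4$ part blows up to $K_{2bn,2bn}$, and it is completely joined to a looped clique of size $3an$.
\item The equitable quotient on the two classes gives that $\lambda_1(G)/n$ is the largest root of $(\lambda-3a)(\lambda-2b)=12ab$.
\item In $\overline G$, the $J_3$ part becomes empty and the join disappears. The looped complement of $C_4$ is $J_2\cup J_2$, so $\overline G$ contains two disjoint looped cliques of size $2bn$. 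Hence $\lambda_1(\overline G)=\lambda_2(\overline G)=2bn$.
\end{itemize}
It then remains to maximize $\lambda(a,b)+2b$ subject to $3a+4b=1$. This is a one-variable calculus problem, and we expect its optimum to equal $\frac87$.

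\emph{Upper bound.} Let $x$ be a unit Perron eigenvector of $A=A(G)$. By Courant--Fischer applied to $\overline A=J-A$ with the single constraint $y\perp x$,
\[
\lambda_2(\overline G)\le \max_{\|y\|=1,\;y\perp x} y^T(J-A)y=\max_{\|y\|=1,\;y\perp x}\bigl((\mathbf 1^Ty)^2-y^TAy\bigr).
\]
Hence
\[
\lambda_1(G)+\lambda_2(\overline G)\le \max\Bigl\{\sum_{u,v}A_{uv}(x_ux_v-y_uy_v)+(\mathbf 1^T y)^2\Bigr\},
\]
where the maximum is over orthonormal pairs $x,y$. We now relax $A$ to an arbitrary symmetric matrix with entries in $[0,1]$. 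The objective is linear in $A$, so it is maximized by setting $A_{uv}=1$ exactly when $x_ux_v>y_uy_v$. This reduces the problem to maximizing
\[
\Phi(x,y)=\sum_{u,v}(x_ux_v-y_uy_v)^+ + \Bigl(\sum_u y_u\Bigr)^2
\]
over orthonormal $x,y\in\mathbb R^n$. Passing to the limit gives the same problem for functions $x,y\in L^2[0,1]$.

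\emph{Main step.} The main obstacle is to show that $\sup\Phi\le \frac87 n$. The plan is as follows.
\begin{enumerate}[nosep]
\item View each vertex as a point $p_u=(x_u,y_u)\in\mathbb R^2$. The sign of $x_ux_v-y_uy_v$ depends only on the angles of $p_u,p_v$: it is the sign of $\cos(\theta_u+\theta_v)$ after reflecting $y$.
\item Use a perturbation/Lagrange argument to show that at an optimum the points take only a bounded number of distinct values. We expect three: a "clique" value with $y_u=0$, and two symmetric values $(c,\pm d)$ corresponding to the two halves of the blown-up $C_4$.
\item This turns the problem into a finite-dimensional optimization in a few weights and coordinates, which we would solve explicitly to obtain $\frac87$.
\end{enumerate}
The hardest part is justifying the reduction to few values in step 2, since $(\cdot)^+$ is not smooth. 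To handle this, we would first fix the angular ordering of the points, which fixes the sign pattern of $x_ux_v-y_uy_v$. On each such region $\Phi$ is a quadratic form, so the optimum is an eigenvector problem for a small quotient matrix. Comparing these quotient matrices across sign patterns then confirms that $J_3\vee C_4$ is optimal.
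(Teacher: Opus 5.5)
Your lower bound is fine, and it needs no optimization: at $a=b=\frac17$ the quotient equation becomes $(7\lambda-3)(7\lambda-2)=12$, so $\lambda_1(G)=\frac67 n$ and the sum is exactly $\frac87 n$. The upper bound, however, fails at the relaxation step, because the inequality $\sup\Phi\le\frac87 n$ that your main step aims for is false. Once $x$ may be any unit vector orthogonal to $y$, you are no longer bounding $\lambda_1(A)+\lambda_2(J-A)$. You are bounding $\max_{x\perp y}(x^TAx+y^T(J-A)y)$, which can be far larger.

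Concretely, let $A$ be the complete split graph: a looped clique $K$ on $n/3$ vertices completely joined to an independent set $I$ on $2n/3$ vertices (the extremal graph for $\lambda_1+\lambda_1$). Set
\[
x=\sqrt{\tfrac{3}{10n}}\,(2\sqrt2\,\mathbf 1_K+\mathbf 1_I),\qquad y=\sqrt{\tfrac{3}{10n}}\,(-\sqrt2\,\mathbf 1_K+2\,\mathbf 1_I).
\]
These vectors are orthonormal. Moreover $x^TAx=(\mathbf 1_K^Tx)^2+2(\mathbf 1_K^Tx)(\mathbf 1_I^Tx)=\frac{4+4\sqrt2}{15}n$ and $y^T(J-A)y=(\mathbf 1_I^Ty)^2=\frac{8}{15}n$, so
\[
\Phi(x,y)\ \ge\ x^TAx+y^T(J-A)y=\frac{12+4\sqrt2}{15}\,n\approx 1.177\,n>\frac87\,n .
\]
Rotating $x,y$ optimally inside $\mathrm{span}(\mathbf 1_K,\mathbf 1_I)$ even gives $(\frac12+\frac{\sqrt{17}}6)n\approx 1.187n$, whereas $\lambda_1(A)+\lambda_2(J-A)=\frac23 n$ here. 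So the $J_3\vee C_4$ configuration does not maximize $\Phi$. No analysis of angular sign patterns in your step 2 can produce $\frac87$, because the relaxed problem itself has supremum strictly above the target.

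What you discarded, namely that $x$ is the Perron eigenvector of $A$, is exactly what the paper's proof uses. Write $c=\mathbf 1^Tx$, $B=J-A$ and $\mu=\lambda_2(B)$. The eigen-equation $\lambda x=c\mathbf 1-Bx$ gives a nonnegative deficit vector $d=\frac c\lambda\mathbf 1-x$. For any nonnegative $p$ with $Bp\ge\mu p$, one gets $p^Tx\le\frac{c\,\mathbf 1^Tp}{\lambda+\mu}$ and $p_i\le \mathbf 1^Tp/\mu$, hence $\sum_{i\in\mathrm{supp}(p)}d_i\ge\frac{c\mu^2}{\lambda(\lambda+\mu)}$. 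The paper then produces two such $p$ with disjoint supports. These are either the Perron vectors of two components of $\overline G$ whose top eigenvalue is at least $\mu$, or otherwise the positive and negative parts of a $\mu$-eigenvector of the unique such component. Summing over the two supports and using $\sum_i d_i=\frac{c(n-\lambda)}{\lambda}$ gives $(n-\lambda)(\lambda+\mu)\ge 2\mu^2$, which rearranges to $\lambda+\mu\le\frac87 n$. Any variational repair of your argument would have to keep $x$ tied to $A$ in a comparable way; orthogonality of $x$ and $y$ alone is too weak a constraint.
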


The proof only uses basic properties of Perron eigenvectors. The proof method also gives a short, self-contained proof of the Nordhaus-Gaddum result on the spectral radius given in Theorem~\ref{thm:alpha11exact}.   

For general $k\ge 3$, Theorem~\ref{thm:genijubng} gives an upper bound on $\alpha_{1, k}$ by the $(0, k-2)$-spread. In Section 3, we improve on this bound. 

\begin{theorem}\label{thm:alpha1kub}
For all $k\ge 2$, 
\[\alpha_{1,k} \le \frac{k + \sqrt{k(4k-1)}}{3k-1}. \]
\end{theorem}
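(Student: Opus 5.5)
The plan is to trade the rank-one perturbation $J$ for eigenvalue information on $A$ itself, and then to play trace and Frobenius-norm identities for $A$ and $\overline A=J-A$ against each other. I work with looped graphs, so the $0/1$ matrix $A$ has complement $\overline A=J-A$; the loop convention changes each eigenvalue by at most $1$, so it does not affect $\alpha_{1,k}$. Write $\lambda_1=\lambda_1(A)$ and $\mu=\lambda_k(\overline A)$, and let $e$ be the sum of the entries of $A$. I will scale by $n$ throughout, setting $a=\lambda_1/n$ and $b=\mu/n$. The goal is to show $a+b\le \frac{k+\sqrt{k(4k-1)}}{3k-1}+o(1)$.

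\emph{Step 1 (spectral consequences of $J$ having rank one).} Weyl's inequality $\lambda_{i+j-1}(X+Y)\le\lambda_i(X)+\lambda_j(Y)$, applied to $J-A=(-A)+J$ with $\lambda_2(J)=0$, gives $\mu\le\lambda_{k-1}(-A)$. Hence $A$ has at least $k-1$ eigenvalues that are $\le-\mu$. Weyl also gives $\lambda_1(A)+\lambda_1(\overline A)\ge n$. Finally, $\overline A$ has its top $k$ eigenvalues $\ge\mu$, and its largest is $\ge n-\lambda_1$.

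\emph{Step 2 (norm and trace constraints).} Since the entries are $0/1$, $\|A\|_F^2=e$ and $\|\overline A\|_F^2=n^2-e$. This gives
\[
e\ge\lambda_1^2+(k-1)\mu^2,\qquad n^2-e\ge(n-\lambda_1)^2+(k-1)\mu^2 .
\]
By itself this only yields the circle-type bound $2a^2-2a+2(k-1)b^2\le 0$, which is too weak (for $k=2$ it gives about $1.207$). I therefore add trace information. We have $\operatorname{tr}A=\sum_v A_{vv}\in[0,n]$, and the $k-1$ eigenvalues $\le-\mu$ must be compensated by positive eigenvalues. Similarly, $\operatorname{tr}\overline A=n-\operatorname{tr}A$, so the $k$ large eigenvalues of $\overline A$ must be compensated by its negative eigenvalues. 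Combining these with Cauchy--Schwarz on the remaining eigenvalues sharpens the Frobenius budget. The trace term for $\overline A$ is most useful through the split $n\ge \operatorname{tr}\overline A\ge (n-\lambda_1)+(k-1)\mu+(\text{sum of the negative eigenvalues})$. I also intend to use $\lambda_1\ge e/n$ (the Rayleigh quotient at $\mathbf 1$) to tie $e$ to $\lambda_1$, together with $\lambda_1\le\sqrt{e}$ applied to the part of the spectrum not already counted.

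\emph{Step 3 (optimization).} After normalizing by $n$ and letting $n\to\infty$, the resulting system of inequalities in $a$, $b$ and $f=e/n^2$ should collapse to a single quadratic constraint. I expect it to be of the form $(3k-1)(a+b)^2-2k(a+b)-k\le 0$ after eliminating $f$ and optimizing the ratio $a:b$; this is the constraint whose larger root is exactly $\frac{k+\sqrt{k(4k-1)}}{3k-1}$. Solving it yields the bound. As a consistency check, at $k=2$ the bound is $\frac{2+\sqrt{14}}{5}\approx1.148$, which is compatible with $\alpha_{1,2}=\frac87$ from Theorem~\ref{thm:alpha12}.

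The main obstacle is Step 2. One must pick exactly the right combination of the Frobenius and trace constraints, in particular how the $k-1$ negative eigenvalues of $A$ and the $k$ large eigenvalues of $\overline A$ share the two norm budgets $e$ and $n^2-e$. The naive sum of the two Frobenius inequalities loses too much. If the trace-based refinement does not close the gap, the fallback is a Perron-vector argument in the style used for Theorem~\ref{thm:alpha12}. There I would take a unit vector $z$ in the span of the top $k$ eigenvectors of $\overline A$ with $z\perp x$ (possible since $k\ge2$), write $\mu\le(\mathbf 1^{\!\top}z)^2-z^{\top}Az$, and bound $(\mathbf 1^{\!\top}z)^2$ using the Perron vector $x$ of $A$ and $\lambda_1$. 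This would produce the same type of quadratic inequality in $a+b$.
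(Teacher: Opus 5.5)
Your proposal does not yet contain a proof. Steps 2 and 3 list ingredients and then assert that the system ``should collapse'' to $(3k-1)s^2-2ks-k\le 0$. That quadratic is just the target bound rewritten (its larger root is the claimed constant), and nothing in the proposal derives it.

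The missing idea is how to make Cauchy--Schwarz on the negative eigenvalues effective. As stated, ``Cauchy--Schwarz on the remaining eigenvalues'' cannot work. The trace deficit may be spread over $\Theta(n)$ negative eigenvalues. If $m$ of them sum to $-S$ with $S=\Theta(n)$ and $m=\Theta(n)$, Cauchy--Schwarz only gives $\sum\lambda^2\ge S^2/m=O(n)$, which is negligible against $n^2$.

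The paper fixes this with the Weyl pairing $\lambda_i(A)+\lambda_{n+1-i}(\overline A)\ge\lambda_n(J)=0$ for $2\le i\le n-k$. Write $\mu_i=\lambda_i(\overline A)$, so $\mu=\mu_k$. The pairing removes all middle eigenvalues from the \emph{combined} identity $\tr A+\tr\overline A=n$ and leaves
\[
n\ge\lambda_1+(\mu_1+\cdots+\mu_k)+(\lambda_{n-k+1}+\cdots+\lambda_n+\mu_n),
\]
with only $k+1$ negative terms. The paper then does the following.
\begin{enumerate}
\item It adds the reweighting inequalities $\frac{z}{k-1}(\mu_k+\lambda_i)\le 0$ for $n-k+2\le i\le n$; you have no analogue of this step.
\item It uses $\mu_1,\dots,\mu_{k-1}\ge\mu_k$.
\item It applies weighted Cauchy--Schwarz to the $k+1$ negative terms.
\item It inserts the result into the combined identity $\|A\|_F^2+\|\overline A\|_F^2=n^2$.
\end{enumerate}
This yields the ellipse
\[
n^2\ge\lambda_1^2+k\mu^2+\frac{\bigl(\lambda_1+(k+z)\mu-n\bigr)^2}{2+(k-1)\bigl(1+\frac{z}{k-1}\bigr)^2}.
\]
With $z=(k-1)\bigl(1+\sqrt{4-1/k}\bigr)$, maximizing $\lambda_1+\mu$ on this ellipse gives the theorem; the maximizer has $\lambda_1=(2k-1)\mu$. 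In particular, the split of the Frobenius budget into $e$ and $n^2-e$, which you call the main obstacle, never has to be tracked.

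The concrete inequalities you do propose would not close the gap. First, $\mu_1\ge n-\lambda_1$ is weaker than $\mu_1\ge\mu$ throughout the relevant regime $\lambda_1+\mu>n$. This is why your baseline at $k=2$ is about $1.207$, whereas $n^2\ge\lambda_1^2+(2k-1)\mu^2$ already gives $\sqrt{2k/(2k-1)}\approx1.155$.

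Second, the trace split you single out, $n\ge(n-\lambda_1)+(k-1)\mu+N$, gives $-N\ge(k-1)\mu-\lambda_1$. This is negative, hence vacuous, at $\lambda_1=(2k-1)\mu$. That is where the bound is attained, and also where the conjectured extremal graph $J_{2k-1}\vee\overline{kJ_2}$ sits. The gain comes only from combining $\lambda_1$ from the trace of $A$ with $k\mu$ from the trace of $\overline A$, as the paired identity above does.

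Third, the bounds $e/n\le\lambda_1\le\sqrt e$ say nothing about the negative spectrum.

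Finally, the Perron-vector fallback is not developed. In the paper, that technique (Lemma~\ref{lem:lam1kupper}) needs $k$ disjointly supported nonnegative vectors $y_r$ with $\overline A y_r\ge\mu y_r$, and these are constructed only for $k\le2$. For $k\ge3$, such vectors would already prove the stronger conjectured bound $\frac{4k}{4k-1}$. A single unit vector in the top-$k$ eigenspace orthogonal to $x$ is not shown to yield any quadratic inequality in $\lambda_1+\mu$.
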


The proof of Theorem~\ref{thm:alpha1kub} uses trace equalities and the Weyl  inequalities to obtain a quadratic inequality in the parameters $\lambda_1(G)$ and $\lambda_k(\overline{G})$ that any graph $G$ must satisfy. Lagrange multipliers are then used to obtain a good bound on $\lambda_1(G) + \lambda_k(\overline{G})$. 

In Section 4, we show that a similar argument gives a general upper bound on $|\beta_{i, j}|$. 

\begin{theorem}\label{thm:betaij}
For all $i, j \ge 1$, 
\[|\beta_{i, j}| \le \frac12 \sqrt{\frac{1}{i} + \frac{1}{j}}.\]
\end{theorem}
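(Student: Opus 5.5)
The plan is to pass to the signed matrix $C=A-B$, where $A$ and $B$ are the adjacency matrices of $G$ and $\overline G$, and to use one trace identity for it. Since $A+B=J-I$, the matrix $C$ has zero diagonal and $\pm1$ off-diagonal entries, so
\[
\sum_{k=1}^n \lambda_k(C)^2=\tr(C^2)=n(n-1)<n^2 .
\]
The sign of $\beta_{i,j}$ should come from Weyl's inequality. The dual form $\lambda_p(A)+\lambda_q(B)\le \lambda_{p+q-n}(A+B)$ with $p=n-i+1$, $q=n-j+1$ gives
\[
\lambda_{n-i+1}(G)+\lambda_{n-j+1}(\overline G)\le \lambda_{n-i-j+2}(J-I)=-1
\]
once $n\ge i+j$. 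Hence $\beta_{i,j}\le 0$, and it suffices to bound $a+b$ from above, where $a=-\lambda_{n-i+1}(G)$ and $b=-\lambda_{n-j+1}(\overline G)$.

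\textbf{Key step.} Let $U$ be the span of eigenvectors of $A$ for its $i$ smallest eigenvalues, and $W$ the analogous $j$-dimensional space for $B$. For a unit vector $x\in U$ we have $x^TAx\le -a$. Using $B=J-I-A$ and $x^TJx=(x\cdot\mathbf 1)^2\ge 0$, we get $x^TBx\ge a-1$, and therefore $x^TCx\le -(2a-1)$. Symmetrically, for a unit vector $x\in W$, $x^TAx=(x\cdot \mathbf 1)^2-1-x^TBx\ge b-1$, so $x^TCx\ge 2b-1$.

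Assume $a,b\ge \tfrac12$; otherwise the bound is trivial asymptotically, since then $a+b$ is controlled by the other term alone and the argument below applies with one side dropped. By Courant--Fischer, $C$ then has at least $i$ eigenvalues $\le -(2a-1)$ and at least $j$ eigenvalues $\ge 2b-1$. These are distinct eigenvalues, being of opposite signs, so
\[
i(2a-1)^2+j(2b-1)^2\le n^2 .
\]

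Cauchy--Schwarz then gives
\[
(2a-1)+(2b-1)\le \sqrt{\left(\tfrac1i+\tfrac1j\right)\bigl(i(2a-1)^2+j(2b-1)^2\bigr)}\le n\sqrt{\tfrac1i+\tfrac1j},
\]
so $a+b\le 1+\tfrac n2\sqrt{\tfrac1i+\tfrac1j}$. Dividing by $n$ and letting $n\to\infty$ yields $|\beta_{i,j}|=-\beta_{i,j}\le \tfrac12\sqrt{\tfrac1i+\tfrac1j}$.

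\textbf{Main obstacle.} The main obstacle is getting the factor $\tfrac12$. A naive use of $\tr(A^2)+\tr(B^2)\le n^2$ on $A$ and $B$ separately loses it. Working in $\mathbf 1^\perp$ costs a dimension, giving $i-1$ in place of $i$. The point to check carefully is that $J\succeq 0$ lets us avoid projecting off $\mathbf 1$, so we keep the full dimensions $i$ and $j$ while doubling the eigenvalue bounds through $C=A-B$.
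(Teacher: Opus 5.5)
Your proof is correct, and it takes a genuinely different route from the paper's.

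The paper treats $G$ and $\overline G$ separately. In the looped setting it uses the two trace identities $\tr A(G)+\tr A(\overline G)=n$ and $\tr A(G)^2+\tr A(\overline G)^2=n^2$. It couples the top eigenvalues of one graph with the bottom eigenvalues of the other through Weyl inequalities carrying tunable weights $z,w\in[0,1]$. Cauchy--Schwarz then gives a quadratic constraint in $x=-\lambda_{n-i+1}/n$ and $y=-\mu_{n-j+1}/n$. Finally it chooses $z=\sqrt{j/(i(i+j))}$ and $w=\sqrt{i/(j(i+j))}$ and runs a Lagrange-multiplier optimization. The cases $\lambda_{n-i+1}\ge 0$ or $\mu_{n-j+1}\ge 0$ are handled by citing a theorem of Nikiforov.

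You instead merge the two graphs into the $\pm1$ matrix $C=A-B$ (minus the Seidel matrix), whose squared Frobenius norm is exactly $n(n-1)$. Using $J\succeq 0$, you double the Rayleigh-quotient bounds, so the single constraint $i(2a-1)^2+j(2b-1)^2\le n(n-1)$ and one Cauchy--Schwarz finish the proof.

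What your route buys:
\begin{itemize}
\item It is shorter and needs no parameter tuning or Lagrange multipliers.
\item It is self-contained: you handle the degenerate cases by dropping a block rather than invoking an external result.
\item It gives the explicit finite-$n$ bound $\lambda_{n-i+1}(G)+\lambda_{n-j+1}(\overline G)\ge -1-\frac n2\sqrt{1/i+1/j}$.
\item It proves $\beta_{i,j}\le 0$ directly via the dual Weyl inequality. The paper's proof leaves this implicit; it follows from Lemma~\ref{lem:betaweyl}.
\end{itemize}

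What the paper's route buys is uniformity: it is the same machinery as its bound on $\alpha_{1,k}$. Your doubling trick is inherently one-sided. For top eigenvalues the term $(x\cdot\mathbf 1)^2$ enters with the unfavorable sign and can be as large as $n$. So the trick does not transfer to quantities like $\lambda_1(G)+\lambda_k(\overline G)$ without projecting off $\mathbf 1$ and losing a dimension.

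One small nit: calling the two groups of eigenvalues ``distinct, being of opposite signs'' needs $a,b>\tfrac12$ strictly. If $2a-1=0$, that block contributes nothing, so the inequality still holds.
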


The upper bound on $|\beta_{i, j}|$ is sharp when $i=j=k$ for infinitely many $k$. This corollary was also previously proven by Nikiforov~\cite{Nikiforov2015}. 

\begin{corollary}\label{cor:betakk}
For all $k\ge 1$,
\[|\beta_{k, k}| \le \frac{1}{\sqrt{2k}}.\]
Furthermore, $\beta_{k, k} = -\frac{1}{\sqrt{2k}}$ if there is a symmetric Hadamard matrix of order $2k$. 
\end{corollary}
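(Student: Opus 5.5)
The bound is the special case $i=j=k$ of Theorem~\ref{thm:betaij}: $\frac12\sqrt{\frac1k+\frac1k}=\frac12\sqrt{\frac2k}=\frac{1}{\sqrt{2k}}$. So the work is in the equality statement. Since the bound already gives $\beta_{k,k}\ge -\frac{1}{\sqrt{2k}}$, it suffices to exhibit, for each $n$ along a sequence, a graph $G$ on $n$ vertices with
\[
\lambda_{n-k+1}(G)+\lambda_{n-k+1}(\overline G)\le -\frac{n}{\sqrt{2k}}+o(n).
\]

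For the construction, let $H$ be a symmetric Hadamard matrix of order $m=2k$. Then $H^2=HH^T=mI$, so its eigenvalues are $\pm\sqrt{m}$. Since $\tr H$ is an integer and $\sqrt m$ is irrational unless $m$ is a square, the multiplicities need some care. To avoid this, I would normalize $H$ so that the diagonal is constant: replacing $H$ by $DHD$ with $D=\mathrm{diag}(\pm1)$ keeps it symmetric Hadamard and can make every diagonal entry $+1$. Then $\tr H=m$, so $\sqrt m\,(p-q)=m$ with $p+q=m$, giving $p-q=\sqrt m$. That forces $m$ to be a square, which is not generally true. So instead I would work with $-H$ if needed and allow loops, which the paper permits for blowups.

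Define the looped graph $G_0$ on $[m]$ by $i\sim j$ iff $H_{ij}=1$. Its adjacency matrix is $A=(J+H)/2$, and the complement, with looped complementation, has matrix $(J-H)/2$. Take the balanced blowup $G=G_0[t]$ with $n=mt$. The adjacency matrices are $A\otimes J_t$ and $\bar A\otimes J_t$, whose nonzero eigenvalues are $t$ times those of $A$ and $\bar A$.

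The eigenvalues of $(J+H)/2$ differ from those of $H/2$, namely $\pm\sqrt m/2$, by a rank-one perturbation. By interlacing, at most one eigenvalue moves past each level. Let $q$ be the multiplicity of $-\sqrt m$ in $H$. Then $A$ has at least $q-1$ eigenvalues equal to $-\sqrt m/2$, and likewise $\bar A$ has at least $p-1$ such eigenvalues, using $-H$. I want both counts to be at least $k$ among the smallest eigenvalues, which requires $p,q\ge k+1$. But $p+q=2k$, so this fails by one.

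The main obstacle is therefore exactly this counting, and the paper presumably uses a more clever choice. What I would try first is to choose $H$, possibly after the $DHD$ switching, so that the all-ones vector $\mathbf 1$ lies in, or is orthogonal to, a suitable eigenspace. If $H\mathbf 1=\sqrt m\,\mathbf 1$, as for a regular symmetric Hadamard matrix, then $J$ commutes with $H$. In that case $A$ has exactly $q$ eigenvalues equal to $-\sqrt m/2$, and $\bar A$ has exactly $p-1$ such eigenvalues, with $\mathbf 1$ giving $(m-\sqrt m)/2$.

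Failing exact regularity, I would instead use the tensor trick: $A\otimes J_t$ has $n-m$ zero eigenvalues, so the bottom eigenvalues are the negative ones. I would then check that $G$ and $\overline G$ each have $k$ eigenvalues at most $-\frac{t\sqrt m}{2}(1-o(1))$. Adding the two gives
\[
-t\sqrt m=-\frac{n}{\sqrt m}=-\frac{n}{\sqrt{2k}},
\]
which is the required upper bound and, combined with Theorem~\ref{thm:betaij}, yields $\beta_{k,k}=-\frac{1}{\sqrt{2k}}$.
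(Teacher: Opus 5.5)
Your upper bound is exactly the paper's (Theorem~\ref{thm:betaij} with $i=j=k$). The equality half, however, has a genuine gap. The obstacle you found is not a loose end: no graph with adjacency matrix $A=\frac12(J+H)$, $H$ symmetric Hadamard of order $m=2k$, can work, and neither can its blowups.

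Write $\mathbf 1=u+v$ with $u\in E_+$, $v\in E_-$, the $\pm\sqrt m$ eigenspaces of $H$ of dimensions $p,q$.
\begin{itemize}
\item Vectors in $E_-\cap v^{\perp}$ are orthogonal to $\mathbf 1$. So $A$ has eigenvalue $-\sqrt m/2$ there, with multiplicity $q$ if $v=0$ and $q-1$ otherwise.
\item On $E_+\cap u^{\perp}$ the eigenvalue is $\sqrt m/2$. On the invariant subspace $\mathrm{span}(u,v)$ the eigenvalues are strictly greater than $-\sqrt m/2$, by strict interlacing for the rank-one term $\frac12\mathbf 1\mathbf 1^{T}$.
\item Symmetrically, $\bar A=\frac12(J-H)$ has $-\sqrt m/2$ with multiplicity $p$ if $u=0$ and $p-1$ otherwise.
\end{itemize}
Since $u,v$ are not both $0$, the two counts total at most $p+q-1=2k-1$. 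So one of $G_0,\overline{G_0}$ has its $k$th smallest eigenvalue strictly above $-\sqrt m/2$, while both are bounded below by $-\sqrt m/2$. Your regular case is an instance of this: it gives $q$ and $p-1$, which cannot both be at least $k$.

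Blowing up does not help. The spectrum of $A\otimes J_t$ is $t\cdot\mathrm{spec}(A)$ plus zeros, so the $k$th smallest eigenvalue of $G_0[t]$ is $t\min\{\lambda_{m-k+1}(A),0\}$. Hence $\lambda_{n-k+1}(G_0[t])+\lambda_{n-k+1}(\overline{G_0}[t])$ exceeds $-n/\sqrt{2k}$ by a fixed positive multiple of $n$, and the check deferred in your last paragraph is false. Separately, the switching $DHD$ never changes the diagonal, since $(DHD)_{ii}=d_i^2H_{ii}$, so the normalization you first tried was never available.

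The missing idea is the doubling in the Brooks--Linz--Lu construction, which the paper reuses (see the discussion after Theorem~\ref{thm:allk}). Let $K=\begin{bmatrix}1&-1\\-1&1\end{bmatrix}$ and take the looped graph on $4k$ vertices with adjacency matrix $\frac12(K\otimes H+J_{4k})$.
\begin{itemize}
\item Since $K\mathbf 1=0$, we get $(K\otimes H)\mathbf 1=0$. So $\mathbf 1$ lies in the kernel of $K\otimes H$ and $J$ commutes with it. This realizes your idea of putting $\mathbf 1$ into an eigenspace, which the invertible $H$ alone cannot do without spending one of its $\pm\sqrt m$ eigenvalues.
\item $K$ has eigenvalues $2,0$, so $K\otimes H$ has eigenvalues $\pm2\sqrt{2k}$ with multiplicities $p,q$ and $0$ with multiplicity $2k$.
\item The graph therefore has spectrum $2k$, $\sqrt{2k}$ ($p$ times), $0$ ($2k-1$ times), $-\sqrt{2k}$ ($q$ times). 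Its looped complement $\frac12(J-K\otimes H)$ has the same spectrum with $p$ and $q$ interchanged.
\end{itemize}
When $p=q=k$, both graphs have $\lambda_{n-k+1}=-\sqrt{2k}$, so the sum is $-2\sqrt{2k}=-n/\sqrt{2k}$ at $n=4k$. The condition $p=q$ is automatic if $2k$ is not a perfect square, since $\tr H=\sqrt{2k}(p-q)$ is an integer; in general it requires $\tr H=0$. The $t$-blowups preserve this ratio, because their looped complements are blowups of the complement ($J_{4kt}=J_{4k}\otimes J_t$). Finally, $\beta^\circ_{k,k}=\beta_{k,k}$ passes the bound to simple graphs, and together with the upper bound this gives $\beta_{k,k}=-1/\sqrt{2k}$.
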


\section{Notations and Previous Results}

It is convenient to work with graphs with at most one loop permitted at each vertex. For such a looped graph $G$ on $n$ vertices, the \emph{adjacency matrix }is the symmetric $n\times n$ $\{0, 1\}$-matrix $A = (a_{ij})_{1\le i, j\le n}$ such that $a_{ij} = 1$ if $ij \in E(G)$ and $a_{ij} = 0$ otherwise. The \emph{$t$-blowup} of a looped graph $G$ is the graph $G(t)$ with vertex-set $V(G(t)) = \{ v_{i, j}: 1\le i\le n, 1\le j\le t\}$ and edge set $E(G(t)) = \{v_{i_1, j_1}v_{i_2,j_2}: i_1i_2 \in E\}$. (Informally, this is the analogue for looped graphs of the ordinary $t$-blowup of a simple graph, where each vertex is replaced by an independent set of size $t$ and edges are replaced by complete bipartite graphs $K_{t, t}$, but some care needs to be taken to work with looped vertices).

The main use of $t$-blowup graphs in the study of extremal linear combinations of graph eigenvalues is that their eigenvalues are simply scaled versions of the underlying host graph (along with additional $0$s), so individual graphs can be used to obtain lower bounds on the corresponding limit problem. 

Let $\mathcal G_n^\circ$ denote the family of graphs on $n$ vertices with loops
allowed. For $G\in\mathcal G_n^\circ$, define the complement by
\[
A(G)+A(\overline G)=J_n,
\]
where $J_n$ is the $n\times n$ all-$1$s matrix. (Hereafter, the subscript $n$ will be omitted when it is clear what it is from context). 

Define
\[
\alpha_{i,j}^\circ
=
\lim_{n\to\infty}
\frac1n
\max_{G\in\mathcal G_n^\circ}
\bigl(
\lambda_i(G)+\lambda_j(\overline G)
\bigr)
\]
and
\[\beta_{i,j}^\circ = \lim_{n \to\infty}\frac{1}{n}\min_{G\in\mathcal{G}_n^\circ}(\lambda_{n-i+1}(G) + \lambda_{n-j+1}(\overline{G})).\]

Adapting an argument from \cite{BrooksLinzLu}, we show that adding loops to the graphs does not change the limit asymptotically. 

\begin{lemma}
For every fixed $i$ and $j$,
\begin{align*}
\alpha_{i,j}&=\alpha_{i,j}^\circ\\
\beta_{i,j}&=\beta_{i,j}^\circ
\end{align*}
\end{lemma}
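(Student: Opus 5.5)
Since $\mathcal G_n\subseteq\mathcal G_n^\circ$, one inequality in each line is immediate, so the work is in the reverse direction. I have to be careful with complements. For a simple graph $G$, the complement in the looped sense is $J-A(G)$, which puts a loop at every vertex. The simple complement is $J-I-A(G)$. So even when $G$ is simple, the looped-class quantity $\lambda_i(G)+\lambda_j(J-A(G))$ differs from the simple-class quantity $\lambda_i(G)+\lambda_j(J-I-A(G))$. The difference is exactly $1$, since subtracting $I$ shifts every eigenvalue by $-1$. Dividing by $n$, this is negligible. The same shift of $1$ appears for $\beta$. Hence $\alpha_{i,j}\le\alpha^\circ_{i,j}$ and $\beta_{i,j}\ge\beta^\circ_{i,j}$.

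For the reverse, I take any $G\in\mathcal G_n^\circ$ with adjacency matrix $A$ and pass to the blowup $G(t)$, whose matrix is $A\otimes J_t$. Its eigenvalues are $t\lambda_k(A)$ together with $n(t-1)$ zeros. The complement is $J_{nt}-A\otimes J_t=(J_n-A)\otimes J_t$, which is the blowup of $\overline G$. Consider the $i$-th largest eigenvalue of $A\otimes J_t$. If $\lambda_i(A)\ge 0$, it equals $t\lambda_i(A)$. In general it is at least $t\lambda_i(A)$ when $\lambda_i(A)\ge0$. When $\lambda_i(A)<0$, it is $\max(0,\cdot)$-type, but it is still at least $t\lambda_i(A)$. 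So the $i$-th largest eigenvalue of the blowup is at least $t\lambda_i(A)$ in all cases. The same holds for $\overline G$ with index $j$.

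Next I convert $G(t)$ into a simple graph $H$ on $N=nt$ vertices. Inside each blown-up looped vertex the blowup has a "clique with loops" $J_t$. I replace that block by $J_t-I_t$, and I delete all loops. The resulting matrix differs from $A\otimes J_t$ by $-D$, where $D$ is a $0/1$ diagonal matrix. The simple complement of $H$ is $J-I-A(H)$, which differs from $(J_n-A)\otimes J_t$ by $-(I-D)$. Each perturbation has spectral norm at most $1$. By Weyl's inequality, $\lambda_i(H)\ge t\lambda_i(A)-1$ and $\lambda_j(\overline H)\ge t\lambda_j(J-A)-1$. Therefore
\[
\max_{\mathcal G_N}\bigl(\lambda_i+\lambda_j(\overline{\cdot})\bigr)\ \ge\ t\bigl(\lambda_i(G)+\lambda_j(\overline G)\bigr)-2.
\]
Divide by $N=nt$ and let $t\to\infty$, using the existence of the limit $\alpha_{i,j}$ (Nikiforov). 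This gives $\alpha_{i,j}\ge \frac1n\max_{\mathcal G_n^\circ}(\cdots)$. Letting $n\to\infty$ gives $\alpha_{i,j}\ge\alpha_{i,j}^\circ$.

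For $\beta$ the argument is symmetric, but the main obstacle is the extra zero eigenvalues. The $(N-i+1)$-th eigenvalue of the blowup is $\min(0,\cdot)$-type: it equals $t\lambda_{n-i+1}(A)$ only when that value is $\le 0$, and otherwise it is at most $t\lambda_{n-i+1}(A)$ or $0$. The fact I need is that the bottom-$i$ eigenvalue of the blowup is at most $t\lambda_{n-i+1}(A)$ whenever $\lambda_{n-i+1}(A)\le 0$. If $\lambda_{n-i+1}(A)>0$, the blowup value is at most $0<t\lambda_{n-i+1}(A)$ once $N>n$. So in all cases it is at most $t\lambda_{n-i+1}(A)$. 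Weyl perturbations of norm $1$ then cost $+2$. Taking limits as before gives $\beta_{i,j}\le\beta^\circ_{i,j}$, which completes both equalities.
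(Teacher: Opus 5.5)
Your proof is correct, and its core mechanism is the paper's: deleting loops perturbs $A(G)$ by $-D$ and the simple complement by $-(I-D)$. Each perturbation has norm at most $1$, so by Weyl the quantity moves by at most $2$.

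The difference is the blowup layer in the reverse direction, which you do not need. Taking $t=1$ in your own construction already gives $\max_{\mathcal G_n}\ge \max_{\mathcal G_n^\circ}-2$ and $\min_{\mathcal G_n}\le \min_{\mathcal G_n^\circ}+2$ at the same $n$. That is the paper's one-line argument. It also does not need the existence of the limit as an input, since the two normalized sequences differ by $O(1/n)$. Your version, by contrast, must invoke Nikiforov's theorem to pass along the subsequence $N=nt$.

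What your detour buys is a slightly stronger pointwise statement. You get $\alpha_{i,j}\ge \frac1n\bigl(\lambda_i(G)+\lambda_j(\overline G)\bigr)$ for every fixed looped $G$, and the analogous upper bound on $\beta_{i,j}$. This is exactly how looped constructions such as $J_{2k-1}\vee\overline{kJ_2}$ are used as lower bounds elsewhere in the paper.

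Your explicit handling of the complement convention, $J-A$ for looped graphs versus $J-I-A$ for simple ones, is correct. The paper's proof leaves this point implicit.

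The case analysis for the blowup eigenvalues can be dropped. The $i$-th largest eigenvalue of $A\otimes J_t$ is at least $t\lambda_i(A)$ simply because $t\lambda_1(A),\ldots,t\lambda_i(A)$ are $i$ eigenvalues all at least $t\lambda_i(A)$. Symmetrically, the $i$-th smallest is at most $t\lambda_{n-i+1}(A)$. So the extra zero eigenvalues are no obstacle at all.
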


\begin{proof}
Adding or deleting loops changes the adjacency matrix by a
diagonal $0$--$1$ matrix $D$.
Hence every eigenvalue changes by at most
$\|D\|=1$ by Weyl's inequality.
Therefore the corresponding extremal quantities differ by at
most $2$.
After division by $n$ and letting $n\to\infty$,
the difference vanishes.
\end{proof}

For a looped graph $G$, since the eigenvalues of $J$ are
\[
n,0,\ldots,0,
\]
we obtain the following simple relation between the eigenvalues of $G$ and $\overline{G}$ when $G$ is regular. 

\begin{lemma}\label{lem:reg}
    Let $G$ be a regular looped graph on $n$ vertices. The eigenvalues of $\overline{G}$ are 
    \[n - \lambda_1(G), -\lambda_2(G), \ldots, -\lambda_n(G). \]
    In particular, if $j\ge 2$ and $\lambda_j(G) \ge 0$, then
    \[ \lambda_j(G) = - \lambda_{n-j+2}(\overline{G}). \]
\end{lemma}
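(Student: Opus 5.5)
Let $G$ be $d$-regular (loops allowed, each loop contributing $1$ to the degree). Then $A(G)\mathbf{1}=d\mathbf{1}$, so the all-ones vector $\mathbf{1}$ is an eigenvector with eigenvalue $d$. Since $A(G)$ has nonnegative entries and constant row sums, $d=\lambda_1(G)$. The plan is to use $\mathbf{1}$ to simultaneously diagonalize $A(G)$ and $J$.

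\textbf{Step 1: a common eigenbasis.} Because $A(G)$ is real symmetric and $\mathbf{1}$ is one of its eigenvectors, we can choose an orthonormal eigenbasis $\mathbf{1}/\sqrt n, x_2,\ldots,x_n$ of $A(G)$ with $A(G)x_k=\lambda_k(G)x_k$. Each $x_k$ with $k\ge 2$ is orthogonal to $\mathbf{1}$. The matrix $J=\mathbf{1}\mathbf{1}^T$ acts on this basis by $J\mathbf{1}=n\mathbf{1}$ and $Jx_k=0$. Hence $A(\overline G)=J-A(G)$ has the same eigenvectors, with eigenvalue $n-\lambda_1(G)$ on $\mathbf{1}$ and $-\lambda_k(G)$ on $x_k$. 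This gives the multiset $\{n-\lambda_1(G),-\lambda_2(G),\ldots,-\lambda_n(G)\}$.

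\textbf{Step 2: the "in particular" statement.} This is the only delicate point, because the eigenvalue $n-\lambda_1(G)$ has to be placed correctly in the sorted order. The values $-\lambda_n(G)\ge\cdots\ge-\lambda_2(G)$ are the negatives of $G$'s eigenvalues in reverse order. I claim $n-\lambda_1(G)\ge -\lambda_2(G)$:
\begin{itemize}
\item If $\lambda_2(G)\ge 0$, this holds because $\lambda_1(G)=d\le n$.
\item If $\lambda_2(G)<0$, we need $n-d\ge-\lambda_2(G)$, which needs more care.
\end{itemize}
To avoid the second case, I will only place the eigenvalue $n-\lambda_1(G)$ relative to those $-\lambda_k(G)$ that are $\le 0$. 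Fix $j\ge2$ with $\lambda_j(G)\ge0$. Then for every $k$ with $2\le k\le j$ we have $-\lambda_k(G)\le-\lambda_j(G)\le 0\le n-\lambda_1(G)$.

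Now count the eigenvalues of $\overline G$ that are at least $-\lambda_j(G)$. They are:
\begin{itemize}
\item the values $-\lambda_k(G)$ for $k\in\{j,\ldots,n\}$, which number $n-j+1$;
\item the value $n-\lambda_1(G)$.
\end{itemize}
(When $\lambda_j(G)$ is repeated, the counting has to be done with multiplicities.) Excluding the ones that lie strictly above, the value $-\lambda_j(G)$ sits at position $(n-j+1)+1=n-j+2$ in decreasing order. Ties do not matter, since only values are being compared.

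Therefore $\lambda_{n-j+2}(\overline G)=-\lambda_j(G)$. Equivalently, the sorted list of $\overline G$'s eigenvalues is $n-\lambda_1(G),-\lambda_n(G),\ldots,-\lambda_2(G)$ wherever $n-\lambda_1(G)$ exceeds the largest of the relevant terms. The expected obstacle is only this index bookkeeping, and the nonnegativity hypothesis on $\lambda_j(G)$ is exactly what makes it go through.
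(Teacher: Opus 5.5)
Your proof is correct and follows the same idea the paper relies on: $\mathbf{1}$ is a common eigenvector of $A(G)$ and $J$, so $J-A(G)$ acts as $n-\lambda_1(G)$ on $\mathbf{1}$ and as $-A(G)$ on $\mathbf{1}^\perp$. Your placement of $-\lambda_j(G)$ at position $n-j+2$ is sound, because the $n-j+2$ values $n-\lambda_1(G),-\lambda_j(G),\ldots,-\lambda_n(G)$ all dominate the remaining values $-\lambda_k(G)$, $2\le k<j$, and the smallest of them is $-\lambda_j(G)$. The case you sidestepped is in fact immediate: $\overline G$ is $(n-\lambda_1(G))$-regular, so $n-\lambda_1(G)$ is its spectral radius and dominates every $-\lambda_k(G)$, which shows the hypothesis $\lambda_j(G)\ge 0$ is not even needed.
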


We now use Weyl's inequalities to obtain various constraints on
\[
\lambda_i(G)+\lambda_j(\overline G).
\]
Our first goal is to establish general bounds on
$\alpha_{i, j}$ using the spectral gaps defined by Brooks, Linz and Lu.

\begin{lemma}\label{lem:s_ub}
For every integer $i\ge 1$ and $j\geq 2$,
\[
s_{i-1, j-1} \le \alpha_{i,j} \le s_{i-1,j-2}. 
\]
Furthermore, if there is a sequence $G_n$ of regular graphs such that \[\lim_{n\rightarrow \infty}\frac{\lambda_i(G_n) - \lambda_{n-j+2}(G_n)}{n} = s_{i-1, j-2},\] then $\alpha_{i, j} = s_{i-1, j-2}$. 
\end{lemma}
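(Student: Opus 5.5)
We work with looped graphs throughout, which is harmless by the preceding lemma ($\alpha_{i,j}=\alpha^\circ_{i,j}$). Since $\spread_{i,j}$ is defined over simple graphs, we also use the analogous fact that loops shift every eigenvalue by at most $1$. Hence the looped version of $s_{i,j}$ equals $s_{i,j}$.

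\textbf{Upper bound.} Write $A(\overline G)=J-A(G)$, so $-A(G)=A(\overline G)-J$. The matrix $-J$ has eigenvalues $0,\dots,0,-n$, i.e.\ it is negative semidefinite of rank one. By Weyl's inequality (interlacing for a rank-one perturbation),
\[
\lambda_{j}(\overline G)-n\le\lambda_{j}(A(\overline G)-J)\le \lambda_{j-1}(A(\overline G)-J)=\lambda_{j-1}(-A(G)),
\]
where the right-hand inequality holds because $\lambda_j(B+P)\le \lambda_{j-1}(B)$ for any positive semidefinite rank-one $P$, here applied with $B=A(\overline G)-J$ and $P=J$. Since $\lambda_{j-1}(-A(G))=-\lambda_{n-j+2}(G)$, we get $\lambda_j(\overline G)\le -\lambda_{n-j+2}(G)$ for $j\ge2$. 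Therefore
\[
\lambda_i(G)+\lambda_j(\overline G)\le \lambda_i(G)-\lambda_{n-j+2}(G)=\spread_{i-1,j-2}(G).
\]
Dividing by $n$, maximizing over $G$ and letting $n\to\infty$ gives $\alpha_{i,j}\le s_{i-1,j-2}$.

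\textbf{Lower bound.} Use the opposite interlacing: $A(\overline G)=J+(-A(G))$ with $J$ positive semidefinite, so $\lambda_j(\overline G)\ge \lambda_j(-A(G))=-\lambda_{n-j+1}(G)$. Hence
\[
\lambda_i(G)+\lambda_j(\overline G)\ge \lambda_i(G)-\lambda_{n-j+1}(G)=\spread_{i-1,j-1}(G).
\]
Choosing $G$ to be an extremal graph for $\spread_{i-1,j-1}(n)$ and letting $n\to\infty$ gives $s_{i-1,j-1}\le\alpha_{i,j}$.

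\textbf{Equality for regular graphs.} If $G_n$ is regular, Lemma~\ref{lem:reg} gives the eigenvalues of $\overline{G_n}$ as $n-\lambda_1(G_n),-\lambda_2(G_n),\dots,-\lambda_n(G_n)$. The step I expect to need the most care is matching $\lambda_j(\overline{G_n})$ with $-\lambda_{n-j+2}(G_n)$ despite the reordering of this list. The multiset $\{-\lambda_2,\dots,-\lambda_n\}$, sorted in decreasing order, has $(j-1)$-th largest element $-\lambda_{n-j+2}(G_n)$. Inserting the one extra value $n-\lambda_1(G_n)$ can only push that element down by one position, so $\lambda_j(\overline{G_n})\ge -\lambda_{n-j+2}(G_n)$. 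Combined with the upper bound, this gives equality: $\lambda_j(\overline{G_n})=-\lambda_{n-j+2}(G_n)$, valid for any regular graph with no sign hypothesis needed. Thus
\[
\frac{\lambda_i(G_n)+\lambda_j(\overline{G_n})}{n}=\frac{\lambda_i(G_n)-\lambda_{n-j+2}(G_n)}{n}\to s_{i-1,j-2},
\]
which gives $\alpha_{i,j}\ge s_{i-1,j-2}$ and hence equality with the upper bound.
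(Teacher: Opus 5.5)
Your overall route is the paper's. Weyl's inequalities for $A(\overline G)=J-A(G)$ give $-\lambda_{n-j+1}(G)\le\lambda_j(\overline G)\le-\lambda_{n-j+2}(G)$, and Lemma~\ref{lem:reg} handles the regular case. Your remark that the looped version of $s_{i,j}$ equals $s_{i,j}$ is a detail the paper leaves implicit.

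However, the step you flagged as delicate is argued in the wrong direction. Inserting $n-\lambda_1(G_n)$ into the sorted list $-\lambda_n(G_n)\ge\cdots\ge-\lambda_2(G_n)$ pushes the $(j-1)$-th entry down by \emph{at most} one position. That only tells you $-\lambda_{n-j+2}(G_n)$ lands in position $j-1$ or $j$. Suppose it stays in position $j-1$, which happens when $n-\lambda_1(G_n)<-\lambda_{n-j+2}(G_n)$. Then $\lambda_j(\overline{G_n})=\max\{n-\lambda_1(G_n),-\lambda_{n-j+1}(G_n)\}$, which can be strictly smaller than $-\lambda_{n-j+2}(G_n)$. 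So ``at most one position'' only reproduces the Weyl upper bound.

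The lower bound needs the entry to move down \emph{exactly} one position, i.e.\ $n-\lambda_1(G_n)\ge-\lambda_{n-j+2}(G_n)$, and your proposal gives no reason for this. The missing fact is short. If $G_n$ is $d$-regular, then $\lambda_1(G_n)=d$ and $\overline{G_n}$ is $(n-d)$-regular. So $n-d$ is the spectral radius of the nonnegative matrix $A(\overline{G_n})$, and it bounds every other eigenvalue of $A(\overline{G_n})$, in particular each $-\lambda_k(G_n)$ with $k\ge 2$. With this line added, $n-\lambda_1(G_n)$ sits at the top of the list, and $\lambda_j(\overline{G_n})=-\lambda_{n-j+2}(G_n)$ for every $j\ge 2$ and every regular graph. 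Your ``no sign hypothesis'' claim is therefore correct, and cleaner than the paper's brief appeal to Lemma~\ref{lem:reg} (whose ``in particular'' clause carries a sign condition) together with ``$n$ large enough''.

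There is also a slip in your upper-bound display. As typeset, it starts from $\lambda_j(\overline G)-n$ and its middle term is $\lambda_j(A(\overline G)-J)=\lambda_j(-A(G))$. Its second inequality is then mere monotonicity of the ordering, and the chain only yields $\lambda_j(\overline G)\le n-\lambda_{n-j+2}(G)$. Your verbal justification is the correct argument and should replace the display. With $B=A(\overline G)-J=-A(G)$ and $P=J$, one gets $\lambda_j(\overline G)=\lambda_j(B+P)\le\lambda_{j-1}(B)=-\lambda_{n-j+2}(G)$, which is what you need.
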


\begin{proof}
For any graph $G$ on $n$ vertices, we have
\[
-\lambda_{n-j+1}(G) \le \lambda_j(\overline G)
\le
-\lambda_{n-j+2}(G),
\]
which follows from the complement relation
\[
A(G)+A(\overline G)=J
\]
and Weyl's inequalities. Consequently, 
\begin{equation}\label{eqn:alphaublb}
\lambda_i(G) - \lambda_{n-j+1}(G) 
\le
\lambda_i(G)+\lambda_j(\overline G)
\le
\lambda_i(G)-\lambda_{n-j+2}(G).
\end{equation}

Let $(G_n)$ be a sequence of graphs attaining asymptotically the
maximum defining $s_{i-1,j-2}$. By the definition of
$s_{i-1,j-2}$,
\[
s_{i-1,j-2}
=
\lim_{n\to\infty}\frac1n
\bigl(
\lambda_i(G_n)-\lambda_{n-j+2}(G_n)
\bigr).
\]

Inequality \eqref{eqn:alphaublb} applied to the graph sequence $(G_n)$ yields
\[
\alpha_{i,j}\le s_{i-1,j-2}
\]
after dividing by $n$ and letting $n\rightarrow \infty$. We obtain $\alpha_{i, j} \ge s_{i-1, j-1}$ similarly.

If $G_n$ is regular and $n$ is large enough, then by Lemma~\ref{lem:reg} we obtain equality in our previous application of Weyl's inequalities. In particular, if the graphs $G_n$ in the sequence achieving the maximum defining $s_{i-1, j-2}$ are regular, then the sequence of graphs $G_n$ also asymptotically achieves $\alpha_{i, j}$.

\end{proof}


We recall the general upper bounds for $s_{i,j}$ proved by Brooks, Linz and Lu~\cite{BrooksLinzLu}. The bounds are not tight in general.
\begin{theorem}[Brooks--Linz--Lu~\cite{BrooksLinzLu}]
\label{thm:genijub}
    For any positive integer $i$ and nonnegative integer $j$ with $i, j \le n$, for all graphs $G$ on $n$ vertices with at most one loop per vertex, we have
    \[ \lambda_{i+1}(G)-\lambda_{n-j}(G)\leq \frac{n}{2}\sqrt{\frac{i+j+1}{i(j+1)}}. \]
    Thus, 
    \[s_{i,j} \le \frac{1}{2}\sqrt{\frac{i+j+1}{i(j+1)}}.\]
\end{theorem}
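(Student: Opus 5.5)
The plan is to subtract off half of the all-ones matrix so that the Frobenius norm becomes exactly computable, and then combine Weyl's inequalities with Cauchy--Schwarz. Let $A$ be the adjacency matrix of $G$ (a symmetric $0$--$1$ matrix, loops allowed on the diagonal), and set $B = A - \frac12 J$. Every entry of $B$ is $\pm\frac12$, so
\[
\sum_{k=1}^n \lambda_k(B)^2 = \tr(B^2) = \frac{n^2}{4}.
\]
Writing $A = B + \frac12 J$ with $\frac12 J$ positive semidefinite of rank one, Weyl's inequalities give two bounds:
\[
\lambda_{i+1}(A) \le \lambda_i(B) \quad\text{and}\quad \lambda_{n-j}(A) \ge \lambda_{n-j}(B).
\]
The first uses the rank-one perturbation interlacing; the second uses positive semidefiniteness.

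Put $a = \lambda_{i+1}(A)$ and $b = -\lambda_{n-j}(A)$. If $a+b\le 0$ there is nothing to prove, so assume $a+b>0$.

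First consider the main case $a,b\ge 0$. Then $\lambda_1(B),\dots,\lambda_i(B)$ are all at least $a$, and $\lambda_{n-j}(B),\dots,\lambda_n(B)$ (there are $j+1$ of these) are all at most $-b$. These index sets are disjoint as long as $i+j+1\le n$; otherwise the statement should be checked separately or is vacuous in the asymptotic use. Therefore
\[
i a^2 + (j+1) b^2 \le \tr(B^2) = \frac{n^2}{4}.
\]
Cauchy--Schwarz then gives
\[
(a+b)^2 \le \left(\frac1i + \frac1{j+1}\right)\bigl(i a^2 + (j+1) b^2\bigr) \le \frac{n^2}{4}\cdot\frac{i+j+1}{i(j+1)},
\]
which is exactly the claimed bound.

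The only mildly delicate step is the sign case analysis. Suppose $a<0\le b$. Then $a+b< b$, and the same trace argument applied only to the bottom $j+1$ eigenvalues gives $b\le \frac{n}{2\sqrt{j+1}}$. This is at most $\frac n2\sqrt{\frac{i+j+1}{i(j+1)}}$, because $\frac{i+j+1}{i(j+1)} \ge \frac{1}{j+1}$. The case $b<0\le a$ is symmetric, using $a\le \frac{n}{2\sqrt i}$.

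The asymptotic statement $s_{i,j}\le \frac12\sqrt{\frac{i+j+1}{i(j+1)}}$ then follows by dividing by $n$ and letting $n\to\infty$. This applies equally to simple graphs, which are a subfamily of the looped graphs considered.
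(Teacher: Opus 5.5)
Your proof is correct and establishes the finite-$n$ inequality exactly as stated, but it is not the route the paper indicates. The paper does not reprove this theorem (it is quoted from Brooks--Linz--Lu). The only derivation it offers is indirect. Theorem~\ref{thm:betaij} bounds $-(\lambda_{n-i+1}(G)+\lambda_{n-j+1}(\overline G))$ using:
\begin{itemize}
\item the identities $\tr A(G)+\tr A(\overline G)=n$ and $\tr A(G)^2+\tr A(\overline G)^2=n^2$;
\item the Weyl pairings $\lambda_{k+2}(G)+\lambda_{n-k}(\overline G)\le 0$;
\item a weighted Cauchy--Schwarz with tuned weights $z,w$, followed by a Lagrange-multiplier optimization.
\end{itemize}
Lemma~\ref{lem:betaweyl} (Weyl again) then gives $s_{j,i-1}\le|\beta_{i,j}|\le\frac12\sqrt{1/i+1/j}$, which becomes the stated bound after renaming indices.

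You instead work with the single matrix $A-\frac12 J$. Since $A(\overline G)-\frac12 J=-(A-\frac12 J)$, this matrix encodes $G$ and $\overline G$ at once, and its squared Frobenius norm is exactly $n^2/4$ for every looped graph. Rank-one interlacing places $i$ of its eigenvalues at or above $a$ and $j+1$ at or below $-b$, and plain Cauchy--Schwarz finishes with no auxiliary weights to optimize.

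What each route buys:
\begin{itemize}
\item Yours is shorter and gives the exact non-asymptotic inequality.
\item The paper's route, as written, loses an additive constant in its sign reduction (it invokes Nikiforov's bound with a $-1$), so it only yields the limit statement for $s_{i,j}$.
\item In exchange, the paper obtains the stronger Nordhaus--Gaddum bound on $|\beta_{i,j}|$, of which the spread bound is a corollary.
\end{itemize}

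One loose end can be closed rather than left open. The case $i+j+1>n$ needs no separate check: there $i+1>n-j$, so $a+b=\lambda_{i+1}(A)-\lambda_{n-j}(A)\le 0$, which you already excluded. Your two mixed-sign cases use only one block of eigenvalues, so disjointness of the index sets is never needed there.
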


For the specific case $i=k$, $j=k-1$, exact equality can be attained for infinitely many values of $k$. 

\begin{theorem}[Brooks--Linz--Lu~\cite{BrooksLinzLu}]
\label{thm:allk}
For all graphs $G$ on $n$ vertices with at most one loop per vertex, we have
\[\lambda_{k+1}(G) - \lambda_{n-k+1}(G) \le \frac{n}{\sqrt{2k}}.\]
Equality holds if there is a symmetric Hadamard matrix of order $2k$. In particular, if such a symmetric Hadamard matrix of order $2k$ exists, then we have $s_{k, k-1} = \frac{1}{\sqrt{2k}}$. 
\end{theorem}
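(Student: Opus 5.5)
The plan is to compare $A=A(G)$ with the $\pm1$ matrix $S=2A-J$, so that $A=\tfrac12(S+J)$, and to use Weyl's inequalities to move the rank-one perturbation $J$ out of the way at the cost of one index. This is the same device as in the proof of Lemma~\ref{lem:s_ub}. Since $J$ is positive semidefinite with eigenvalues $n,0,\ldots,0$, Weyl's inequality $\lambda_{a+b-1}(X+Y)\le \lambda_a(X)+\lambda_b(Y)$ with $a=k$, $b=2$ gives
\[
\lambda_{k+1}(A)\le \tfrac12\lambda_k(S)+\tfrac12\lambda_2(J)=\tfrac12\lambda_k(S).
\]
Positive semidefiniteness of $J$ gives
\[
\lambda_{n-k+1}(A)\ge \tfrac12\lambda_{n-k+1}(S).
\]
Hence
\[
\lambda_{k+1}(A)-\lambda_{n-k+1}(A)\le \tfrac12\bigl(\lambda_k(S)-\lambda_{n-k+1}(S)\bigr).
\]

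Next I bound the spectral gap of $S$ by a trace argument. All entries of $S$ are $\pm1$, and loops are allowed, so the diagonal is also $\pm1$. Therefore $\sum_i \lambda_i(S)^2=\tr(S^2)=n^2$. Set $x=\max(\lambda_k(S),0)$ and $y=\max(-\lambda_{n-k+1}(S),0)$. We may assume $2k\le n$, since otherwise the indices overlap and the gap is trivially small or the statement is vacuous. Then the $k$ largest and the $k$ smallest eigenvalues occupy disjoint index sets, so
\[
kx^2+ky^2\le n^2 .
\]
The gap $\lambda_k(S)-\lambda_{n-k+1}(S)$ is at most $x+y$. By Cauchy--Schwarz,
\[
x+y\le\sqrt{2(x^2+y^2)}\le n\sqrt{2/k}.
\]
Halving this gives $\lambda_{k+1}(G)-\lambda_{n-k+1}(G)\le n/\sqrt{2k}$. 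For $s_{k,k-1}$ this yields the upper bound after dividing by $n$.

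For the equality statement, let $H$ be a symmetric Hadamard matrix of order $2k$. I take the looped graph on $2k$ vertices with $A_0=\tfrac12(J_{2k}+H)$ and its $t$-blowup, so $n=2kt$ and $S=H\otimes J_t$. The nonzero eigenvalues of $S$ are $t\sqrt{2k}\cdot(\pm1)=\pm n/\sqrt{2k}$. We need equality in every step: in the trace step this requires exactly $k$ eigenvalues equal to $+\sqrt{2k}$ and $k$ equal to $-\sqrt{2k}$, i.e.\ $\tr H=0$. We also need the $J$ perturbation not to destroy the $(k+1)$-st eigenvalue.

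This last point is the step I expect to be the main obstacle. The cleanest route is to normalize $H$, by simultaneous signings $DHD$ or permutations, which preserve symmetry and the Hadamard property, so that the all-ones vector interacts well with $H$. Ideally $H$ has constant row sums, which makes $\mathbf 1$ an eigenvector of $H$ and lets $J$ and $H$ commute. Then the spectrum of $A$ is computed exactly: it is $\tfrac12(\pm n/\sqrt{2k})$ with one eigenvalue shifted by $n/2$, together with zeros.

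If such a normalization is not available, I would instead count multiplicities directly. Since $J\succeq 0$ has rank one, interlacing gives $\lambda_{k+1}(A)\ge\lambda_{k+1}(\tfrac12 S)$ and $\lambda_{n-k+1}(A)\le\lambda_{n-k}(\tfrac12 S)$. I would then choose the signing of $H$, flipping $H\to -H$ if needed, so that the relevant multiplicities are large enough for these to equal $\pm n/(2\sqrt{2k})$. Together with the upper bound and Lemma~\ref{lem:s_ub}, this would give $s_{k,k-1}=1/\sqrt{2k}$.
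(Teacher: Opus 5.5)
Your proof of the inequality itself is correct. Writing $S=2A-J$ as a $\pm1$ matrix, Weyl's inequality for the rank-one $J$ gives $\lambda_{k+1}(A)-\lambda_{n-k+1}(A)\le\tfrac12\bigl(\lambda_k(S)-\lambda_{n-k+1}(S)\bigr)$. Then $\tr S^2=n^2$ and Cauchy--Schwarz finish it; when $2k>n$ the claim is trivial because $\lambda_{k+1}\le\lambda_{n-k+1}$.

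\textbf{The gap is in the equality part.} The graph $\tfrac12(J_{2k}+H)$ and its $t$-blowups can never attain equality, for any choice or normalization of $H$. Following your own chain of inequalities, equality forces $\lambda_{k+1}(A)=\tfrac{n}{2\sqrt{2k}}>0$ and $\lambda_{n-k+1}(A)=-\tfrac{n}{2\sqrt{2k}}<0$. So $A$ needs at least $k+1$ positive and $k$ negative eigenvalues, i.e.\ rank at least $2k+1$. But $A_0\otimes J_t$ has rank at most $2k$.

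Concretely, take $\tr H=0$. Interlacing shows that $H+J_{2k}$ has eigenvalue $\sqrt{2k}$ at indices $2,\dots,k$ and $-\sqrt{2k}$ at indices $k+2,\dots,2k$, while $c=\lambda_{k+1}(H+J_{2k})$ lies in $[-\sqrt{2k},\sqrt{2k}]$. For $t\ge 2$ this yields $\lambda_{k+1}(A)-\lambda_{n-k+1}(A)=\tfrac{t}{2}|c|\le\tfrac{n}{2\sqrt{2k}}$, only half the target.

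Your constant-row-sum scenario is exactly this phenomenon. The eigenvalue shifted by $n/2$ is one of the $\pm\tfrac{n}{2\sqrt{2k}}$ eigenvalues, so one of the two blocks of size $k$ loses a member. Signings, permutations, or $H\to-H$ cannot repair this, because $H$ is nonsingular and hence $S\mathbf 1=t\,(H\mathbf 1_{2k})\otimes\mathbf 1_t\neq0$.

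\textbf{The missing idea.} The vector $\mathbf 1$ must lie in the kernel of $S$. Then $SJ=JS=0$, and $J$ only lifts a zero eigenvalue to $n$, leaving the $2k$ extremal eigenvalues untouched. Since $H$ is nonsingular, this needs a larger base graph. That is the construction given right after the statement: $A=\tfrac12(K\otimes H+J_{4k})$ with $K=\begin{bmatrix}1&-1\\-1&1\end{bmatrix}$.
\begin{itemize}
\item The vectors $(1,-1)^T\otimes v$ with $Hv=\pm\sqrt{2k}\,v$ are eigenvectors of $K\otimes H$ for $\pm2\sqrt{2k}$.
\item The vectors $(1,1)^T\otimes v$ form its kernel, and $\mathbf 1_{4k}=(1,1)^T\otimes\mathbf 1_{2k}$ lies in that kernel.
\item So for $\tr H=0$ the spectrum of $A$ is $2k$, then $\sqrt{2k}$ with multiplicity $k$, $0$ with multiplicity $2k-1$, and $-\sqrt{2k}$ with multiplicity $k$.
\end{itemize}
This gives $\lambda_{k+1}-\lambda_{n-k+1}=2\sqrt{2k}=4k/\sqrt{2k}$, and the $t$-blowups give exactly $n/\sqrt{2k}$ with $n=4kt$.

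The condition $\tr H=0$, which you correctly flagged, is automatic unless $2k$ is a perfect square. This is because $\tr H$ is an integer equal to $\sqrt{2k}$ times the difference of the multiplicities of $\pm\sqrt{2k}$.

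Finally, Lemma~\ref{lem:s_ub} plays no role here. The blowup sequence (loops cost only $O(1)$ by Weyl) together with your upper bound already gives $s_{k,k-1}=1/\sqrt{2k}$.
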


The matching lower bound construction is as follows. Let $K = \begin{bmatrix} 1 & - 1 \\ - 1 & 1\end{bmatrix}$ and let $H$ be a symmetric Hadamard matrix of order $2k$. Then the looped graph $G$ on $4k$ vertices whose adjacency matrix is
\[\frac12(K\otimes H + J_{4k})\]
has $\lambda_{k+1}(G) - \lambda_{n-k+1}(G) = \frac{4k}{\sqrt{2k}}$. The $t$-blowups of $G$ give an infinite sequence of graphs on $n=4kt$ vertices with $\lambda_{k+1}(G) - \lambda_{n-k+1}(G) = \frac{n}{\sqrt{2k}}$, proving tightness of the upper bound when the relevant Hadamard matrix exists. 

Note that Theorem~\ref{thm:genijub} does not cover the linear combinations $\lambda_1 - \lambda_{n-j}$. The next theorem addresses these cases. 

\begin{theorem}\label{thm:0jthm}
    For any integer $0\leq j\leq n-1$, for all graphs $G$ on $n$ vertices with at most one loop per vertex, we have
    \[ \lambda_{1}(G)-\lambda_{n-j}(G)\leq \frac{n}{2}\left(1+\sqrt{\frac{j+2}{j+1}}\right). \]
    Thus,
    \[s_{0, j} \le \frac{1}{2}\left(1+\sqrt{\frac{j+2}{j+1}}\right)\]
\end{theorem}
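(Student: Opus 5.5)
Write $\lambda_1=\lambda_1(G)$ and $\mu_t=\lambda_{n-t}(G)$ for $0\le t\le j$, so that $\mu_j\le\mu_{j-1}\le\cdots\le\mu_0=\lambda_n$. The plan is to combine two trace identities with Cauchy--Schwarz, which is presumably how Theorem~\ref{thm:genijub} was obtained. The two identities are
\[
\sum_{t}\lambda_t(G)=\operatorname{tr}A\le n,
\qquad
\sum_t \lambda_t(G)^2=\operatorname{tr}A^2=\sum_{u,v}a_{uv}\le n^2.
\]

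First reduce to the case $\mu_j<0$. If $\mu_j\ge 0$, then $\lambda_1-\mu_j\le\lambda_1\le n$. Since $\frac12\bigl(1+\sqrt{\frac{j+2}{j+1}}\bigr)>1$, this case already satisfies the claim. So assume $\mu_j=-y$ with $y>0$.

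Next bound $y$ using both identities. Each of the $j+1$ smallest eigenvalues is at most $-y$, so their squares sum to at least $(j+1)y^2$. Let $x=\lambda_1$, and let $S$ be the sum of all remaining eigenvalues other than $\lambda_1$ and these $j+1$ smallest. From the trace,
\[
S=\operatorname{tr}A-x-\sum_{t=0}^{j}\mu_t\ge -x+(j+1)y-O(n)\cdot 0 .
\]
More precisely, $S\ge (j+1)y-x$ up to the diagonal contribution, which is at most $n$. Cauchy--Schwarz gives $\sum(\text{rest})^2\ge S^2/(n-j-2)$, but this is too weak because the denominator is of order $n$. The cleaner route is to drop the middle eigenvalues and use only
\[
x^2+(j+1)y^2\le \operatorname{tr}A^2\le n^2 .
\]
This alone gives $x+y\le n\sqrt{1+\frac1{j+1}}=n\sqrt{\frac{j+2}{j+1}}$, which is weaker than the target. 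To sharpen it, I would use $\operatorname{tr}A^2=\sum_v d_v$ together with $\lambda_1\ge \frac{1}{n}\sum_v d_v$, the Rayleigh quotient with the all-ones vector. This gives $\operatorname{tr}A^2\le n\lambda_1$, so
\[
x^2+(j+1)y^2\le nx .
\]

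Finally, maximize $x+y$ subject to $x^2-nx+(j+1)y^2\le 0$. Rewrite the constraint as the ellipse $(x-\frac n2)^2+(j+1)y^2\le \frac{n^2}{4}$. By Cauchy--Schwarz,
\[
\Bigl(x-\tfrac n2\Bigr)+y\le \sqrt{1+\tfrac{1}{j+1}}\cdot\frac n2 ,
\]
and therefore
\[
x+y\le \frac n2\Bigl(1+\sqrt{\tfrac{j+2}{j+1}}\Bigr),
\]
which is exactly the claimed bound.

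The main obstacle is the step $\operatorname{tr}A^2\le n\lambda_1$, and in particular making sure that loops do not spoil it. They do not: for a $0/1$ symmetric matrix, $\operatorname{tr}A^2=\sum_{u,v}a_{uv}^2=\sum_{u,v}a_{uv}=\mathbf 1^{T}A\mathbf 1\le n\lambda_1$, and this holds with loops included. With that in hand, the first-step reduction and the Lagrange/Cauchy--Schwarz computation are routine. The $s_{0,j}$ statement then follows by dividing by $n$ and letting $n\to\infty$.
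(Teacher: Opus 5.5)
The paper gives no proof of this statement. It appears in Section~2 next to the results quoted from Brooks, Linz and Lu and is used only as input to Theorem~\ref{thm:genijubng}, so there is no in-paper argument to compare with. Judged on its own, your proof is correct.

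The key inequality
\[
\lambda_1^2+(j+1)\lambda_{n-j}^2\le \operatorname{tr}A^2=\mathbf 1^{T}A\mathbf 1\le n\lambda_1
\]
holds for looped graphs because $a_{uv}^2=a_{uv}$. The Cauchy--Schwarz step on the ellipse $(x-\frac n2)^2+(j+1)y^2\le\frac{n^2}{4}$ then yields exactly $\frac n2\bigl(1+\sqrt{\frac{j+2}{j+1}}\bigr)$.

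It is worth comparing this with the paper's own trace arguments in Theorems~\ref{thm:l1lkub} and~\ref{thm:betaij}. Those combine the linear identity $\operatorname{tr}A(G)+\operatorname{tr}A(\overline G)=n$ with Weyl pairings, Cauchy--Schwarz and a Lagrange-multiplier computation. Your route needs no linear constraint and no information about the middle eigenvalues. The Rayleigh quotient with $\mathbf 1$ ties the Frobenius mass directly to $\lambda_1$, and that is what lets the bound beat the naive $x^2+(j+1)y^2\le n^2$. The price is that the trick relies on $\lambda_1$ itself being one of the two eigenvalues in the gap.

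Two clean-ups are needed.
\begin{enumerate}
\item Delete the abandoned paragraph about $S$. The expression $-O(n)\cdot 0$ and the remark about the diagonal contribution are meaningless as written, and nothing later uses them.
\item State explicitly that $\lambda_1\ge \frac1n\mathbf 1^TA\mathbf 1\ge 0$, so $\lambda_{n-j}<0$ forces $n-j\ge 2$. This guarantees that $\lambda_1^2$ and the $j+1$ terms $\lambda_{n-j}^2,\dots,\lambda_n^2$ are distinct summands of $\operatorname{tr}A^2$. It also shows that the case $j=n-1$ falls under your first case.
\end{enumerate}
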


Lemma~\ref{lem:s_ub} and  Theorems~\ref{thm:genijub}, \ref{thm:allk} and \ref{thm:0jthm} imply the general upper bounds for $\alpha_{i, j}$ stated in Theorem~\ref{thm:genijubng}.
In the next section, we improve on these results for $\alpha_{1, k}$.

\section{Bounds on \texorpdfstring{$\alpha_{1, k}$}{}}

We first give a general lower bound on $\alpha_{1, k}$ coming from a specific graph construction. 
\begin{proposition}\label{prop:alpha1klb}
    \[\alpha_{1, k} \ge \frac{4k}{4k-1}.\]
\end{proposition}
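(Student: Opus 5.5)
The plan is to exhibit, for each $k$, one looped graph $G$ on $4k-1$ vertices with $\lambda_1(G)+\lambda_k(\overline G)=4k$. Its $t$-blowups $G(t)$ have all eigenvalues scaled by $t$, and this holds for both the graph and its complement. Taking $t\to\infty$ therefore gives $\alpha^\circ_{1,k}\ge \frac{4k}{4k-1}$. By the lemma equating $\alpha_{i,j}$ and $\alpha_{i,j}^\circ$, this also gives $\alpha_{1,k}\ge \frac{4k}{4k-1}$.

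The construction generalizes the extremal graph $J_3\vee C_4$ of Theorem~\ref{thm:alpha12}; note that $C_4$ is the cocktail party graph on $4$ vertices. Let $H$ be the loopless cocktail party graph on $2k$ vertices, i.e.\ $K_{2k}$ minus a perfect matching. Set $G=J_{2k-1}\vee H$, where $J_{2k-1}$ is the looped complete graph, so $n=4k-1$.

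We compute the complement via $A(G)+A(\overline G)=J$. The $2k-1$ clique vertices are isolated (and loopless) in $\overline G$. On the other $2k$ vertices, $\overline G$ is $k$ disjoint copies of $J_2$: each vertex gets a loop and is joined to its matched partner. Hence the spectrum of $\overline G$ is $2$ with multiplicity $k$, followed by zeros, so $\lambda_k(\overline G)=2$.

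For $\lambda_1(G)$, use the equitable partition into the clique part and the $H$ part. Its quotient matrix is
\[
\begin{bmatrix} 2k-1 & 2k\\ 2k-1 & 2k-2\end{bmatrix},
\]
with trace $4k-3$ and determinant $-2(2k-1)$. The discriminant is $(4k-3)^2+8(2k-1)=(4k-1)^2$, so the largest quotient eigenvalue is $\frac{(4k-3)+(4k-1)}{2}=4k-2$. Since $G$ is connected and nonnegative, the Perron eigenvalue of $G$ equals the largest eigenvalue of this equitable quotient, so $\lambda_1(G)=4k-2$.

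Adding the two pieces gives $\lambda_1(G)+\lambda_k(\overline G)=4k=\frac{4k}{4k-1}\,n$. For the blowup step, note that $A(G(t))=A(G)\otimes J_t$, and the complement of $G(t)$ is $\overline G(t)$ because $J_{nt}=J_n\otimes J_t$. Therefore the nonzero eigenvalues of both matrices are multiplied by $t$, with only extra zeros added. Since $\lambda_1(G)$ and $\lambda_k(\overline G)$ are positive, their indices are preserved, which gives the claimed ratio for every $t$.

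There is no real obstacle once the construction is in hand. The only care needed is in checking the looped complement, in particular that the clique vertices become loopless isolated vertices and each matched pair becomes a $J_2$.
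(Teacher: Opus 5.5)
Your proposal is correct and follows the paper's proof: your cocktail-party graph $H$ is exactly the looped complement $\overline{kJ_2}$, so $G=J_{2k-1}\vee\overline{kJ_2}$ is the same graph, with $\lambda_k(\overline G)=2$ coming from the $k$ copies of $J_2$, and $t$-blowups finishing the argument. The only cosmetic difference is how you get $\lambda_1(G)=4k-2$: you use the $2\times 2$ equitable quotient matrix, whereas the paper exhibits the positive eigenvector equal to $2k$ on the clique and $2k-1$ elsewhere, which is just the Perron vector of that same quotient lifted to $G$.
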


\begin{proof}
Let $G=\overline{kJ_2\cup (2k-1)K_1}=J_{2k-1}\vee \overline{kJ_2}$. Since $\lambda_1(J_2) = 2$, it follows that $\lambda_k(\overline{G}) = 2$. Also, $\lambda_1(G) = 4k-2$, which is the eigenvalue associated with the Perron eigenvector $x = (x_i)_{1\le i\le 4k-1}$, where $x_i = 2k$ if $i\in V(J_{2k-1})$ and $x_i = 2k-1$ otherwise. This implies $\lambda_1(G) + \lambda_k(\overline{G}) = 4k$, and the sequence of $t$-blowups of $G$ gives $\alpha_{1, k} \ge \frac{4k}{4k-1}$. 
\end{proof}

The general upper bound on $\alpha_{1, k}$ from $s_{0, k-2}$ given in Theorem~\ref{thm:genijubng} is $\frac12\left(1+\sqrt{\frac{k}{k-1}}\right)$. Note that
\[\frac12\left(1+\sqrt{\frac{k}{k-1}}\right) = 1 + \frac{1}{4k} + \frac{3}{16k^2} + \frac{5}{32k^3} + O\left(\frac{1}{k^4}\right). \]

We now improve on the upper bound for $\alpha_{1, k}$ coming from the $(0, k-2)$-spread.  
\begin{theorem}\label{thm:l1lkub}
    Let $G$ be a looped graph on $n$ vertices. For any $k\ge 2$, 
    \[\lambda_1(G) + \lambda_k(\overline{G}) \le n\frac{k + \sqrt{k(4k-1)}}{3k-1} \]
\end{theorem}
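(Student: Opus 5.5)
The plan is to reduce the statement to a quadratic constraint on the two normalized quantities $a=\lambda_1(G)/n$ and $b=\lambda_k(\overline G)/n$, as sketched after Theorem~\ref{thm:alpha1kub}, and then maximize $a+b$ over the resulting region. Write $A=A(G)$ and $B=A(\overline G)=J-A$. We may assume $b>0$, since otherwise $\lambda_1(G)\le n$ already gives the bound. The two sources of information are the trace identity and Weyl's inequality, which is used exactly as in the proof of Lemma~\ref{lem:s_ub}.

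\emph{Step 1 (trace identity).} Let $e$ be the number of $1$-entries of $A$, loops included. Then
\[\operatorname{tr}A^2+\operatorname{tr}B^2=e+(n^2-e)=n^2.\]

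\emph{Step 2 (contributions to $\operatorname{tr}A^2$).} From the proof of Lemma~\ref{lem:s_ub}, $\lambda_k(\overline G)\le-\lambda_{n-k+2}(G)$. Hence the $k-1$ eigenvalues $\lambda_{n-k+2}(A),\dots,\lambda_n(A)$ are all $\le -b n$, and so
\[\operatorname{tr}A^2\ge \lambda_1(A)^2+(k-1)b^2n^2 .\]

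\emph{Step 3 (contributions to $\operatorname{tr}B^2$).} The matrix $B$ has $k$ eigenvalues $\ge bn$, namely $\lambda_1(B),\dots,\lambda_k(B)$. We also need a lower bound on $\lambda_1(B)$ in terms of $a$. For this, let $x$ be the unit Perron vector of $A$ and set $s=(x\cdot\mathbf 1)^2\le n$. The Rayleigh quotient gives
\[\lambda_1(B)\ge x^TBx=s-\lambda_1(A).\]
Combining Steps 1--3 yields, after normalizing by $n$,
\[a^2+(s/n-a)_+^2+(2k-2)b^2\le 1,\]
possibly with $\lambda_1(B)$ replaced by $\max(bn,\,s-\lambda_1(A))$.

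\emph{Step 4 (controlling $s$).} The key remaining input is a lower bound on $s$ in terms of $a$, using only basic Perron-vector properties. From the eigen-equation,
\[a n\,x_i=\sum_j A_{ij}x_j\le x\cdot\mathbf 1 .\]
This gives $x_i\le \sqrt{s}/(an)$. Together with $\|x\|=1$, it yields
\[1=\sum_i x_i^2\le \max_i x_i\cdot\sum_i x_i\le \frac{s}{an},\]
so $s\ge an$. This alone only reproduces the $(0,k-2)$-spread bound of Theorem~\ref{thm:genijubng}, since it gives $\lambda_1(B)\ge 0$ trivially. To do better, I would keep $e$ as an explicit parameter. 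The relevant facts are $a\ge e/n^2$, $\operatorname{tr}A^2=e$ and $\operatorname{tr}B^2=n^2-e$, together with $\lambda_1(B)\ge n-e/n$ from the all-ones test vector, while also using the $k-1$ eigenvalues of $A$ below $-bn$ and the trace $\operatorname{tr}A\le n$. The aim is to derive a single quadratic inequality $Q(a,b)\le 0$, which should be tight at the construction of Proposition~\ref{prop:alpha1klb}.

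\emph{Step 5 (optimization).} I would then maximize $a+b$ subject to $Q(a,b)\le0$ by Lagrange multipliers. The stationarity condition $\nabla Q\parallel(1,1)$ together with $Q=0$ should produce a quadratic equation whose larger root is $\frac{k+\sqrt{k(4k-1)}}{3k-1}$. This can be checked by substitution; for $k=2$ the value is $\frac{2+\sqrt{14}}{5}$.

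The main obstacle is Step 4. The crude bound obtained by summing the two trace inequalities collapses exactly to $(a-\tfrac12)^2+(k-1)b^2\le\tfrac14$, which is the spread bound. An improvement needs a constraint coupling $e$, $a$ and $\lambda_1(B)$ more tightly, for instance $(n-e/n)^2+(k-1)b^2n^2\le n^2-e$ used jointly with $\lambda_1(A)^2+(k-1)b^2n^2\le e$ and $e\le an^2$, rather than summing them. My first attempt would be to eliminate $e$ from this system, treating $b$ as a function of $a$. Only if that still fails would I bring in the trace constraint $\operatorname{tr}A\ge -(k-1)bn+\lambda_1(A)+\dots$ to force a non-trivial correction.
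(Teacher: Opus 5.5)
Your outline (a quadratic constraint in $a=\lambda_1(G)/n$, $b=\lambda_k(\overline G)/n$, followed by Lagrange multipliers) has the same shape as the paper's proof. There is a genuine gap, though: the constraint itself, which is the whole content of the proof, is never derived, and the route you sketch in Step 4 cannot derive it.

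With $s\ge an$, Steps 1--3 yield only $a^2+(2k-1)b^2\le 1$, i.e.\ $a+b\le\sqrt{2k/(2k-1)}$. For $k=2$ this is $2/\sqrt3\approx1.1547$, above the target $(2+\sqrt{14})/5\approx1.1483$. The extra relations you list do not cut this off. For $k=2$, the values $e/n^2=5/6$, $a=\sqrt3/2$, $b=1/(2\sqrt3)$ satisfy $e\le an^2$, $\lambda_1(A)^2+b^2n^2\le e$ and $\max(bn,\,n-e/n)^2+b^2n^2\le n^2-e$ simultaneously, yet $a+b=2/\sqrt3$. So eliminating $e$ gives nothing beyond the crude bound. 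A constraint on $\tr A$ alone is equally useless: $0\le\tr A\le n$ forces nothing on the negative eigenvalues of $A$, because its other positive eigenvalues are unconstrained.

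Your Steps 4 and 5 also pull in opposite directions. A constraint tight at the construction of Proposition~\ref{prop:alpha1klb} would give $4k/(4k-1)$. That is strictly smaller than the stated constant, and it is the paper's open conjecture for $k\ge 3$.

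The missing idea is the \emph{linear} trace identity applied jointly to $G$ and $\overline G$, together with Weyl pairing. Write $\mu_i=\lambda_i(\overline G)$. Then $n=\sum_i\lambda_i+\sum_i\mu_i$, and Weyl gives $\lambda_i+\mu_{n+1-i}\ge\lambda_n(J)=0$. Pairing $\lambda_i$ with $\mu_{n+1-i}$ for $2\le i\le n-k$ removes every middle eigenvalue and leaves
\[ -(\lambda_{n-k+1}+\dots+\lambda_n+\mu_n)\ \ge\ \lambda_1+\mu_1+\dots+\mu_k-n\ \ge\ \lambda_1+k\mu_k-n, \]
which is positive in the only relevant case $\mu_k>0$, $\lambda_1+\mu_k>n$. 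So the negative eigenvalues $\lambda_{n-k+1},\dots,\lambda_n,\mu_n$ carry a definite total mass; note that $\lambda_{n-k+1}$ and $\mu_n$ never appear in your Steps 2--3. Cauchy--Schwarz turns this mass into an extra squared term in $\tr A^2+\tr B^2=n^2$.

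This must be merged with the Weyl bounds $\lambda_i\le-\mu_k$ ($n-k+2\le i\le n$) that you already use; on its own, for $k=2$, it gives only $7/6$. The paper does the merging by adding $\frac{z}{k-1}\sum_{i=n-k+2}^{n}(\mu_k+\lambda_i)\le 0$ to the trace inequality before applying Cauchy--Schwarz. This gives, for every $z\ge 0$,
\[ n^2\ \ge\ \lambda_1^2+k\mu_k^2+\frac{\bigl(\lambda_1+(k+z)\mu_k-n\bigr)^2}{2+(k-1)\bigl(1+\frac{z}{k-1}\bigr)^2}. \]
With $z=(k-1)\bigl(1+\sqrt{4-1/k}\bigr)$, the Lagrange step on this ellipse yields exactly $n\frac{k+\sqrt{k(4k-1)}}{3k-1}$.
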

Theorem~\ref{thm:alpha1kub} follows immediately from Theorem~\ref{thm:l1lkub} by dividing through by $n$ and taking the limit as $n\rightarrow\infty$.  
 
\begin{proof}[Proof of Theorem~\ref{thm:l1lkub}]
    Let $G$ be a looped graph on $n$ vertices with eigenvalues $\lambda_1 \ge \ldots \ge \lambda_n$ which maximizes $\lambda_1(G) + \lambda_k(\overline{G})$. We may assume that $\lambda_1(G) + \lambda_k(\overline{G}) \ge \frac{4k}{4k-1}n$, so that in particular $\lambda_k(\overline{G}) > 0$.  Henceforth, we denote the eigenvalues of $\overline{G}$ by $\mu_1 \ge \mu_2 \ge \ldots \ge \mu_n$. 
    By taking the trace of both sides of the equation $A(G)+ A(\overline{G}) = J$, we find
    \begin{align}
    n &= (\lambda_1 + \ldots + \lambda_n) + (\mu_1 + \ldots + \mu_n) \nonumber\\
    & \ge (\lambda_1 + \lambda_{n-k+1} +\ldots  +\lambda_{n-1} + \lambda_n) + (\mu_1 + \mu_2 +\ldots + \mu_k + \mu_n).  \label{eqn:trAAbar}
    \end{align}
    In the second line, we used $\lambda_i + \mu_{n+1-i} \ge \lambda_n(J) = 0 $, which follows from Weyl's inequality. Let $z \ge 0$ be a real number. By Weyl's inequality, for all $i$ such that $n-k+2 \le i \le n$, we have that $\frac{z}{k-1}(\mu_{k} + \lambda_{i}) \le 0$. Adding these $k-1$ inequalities together gives the inequality 
    \begin{equation}\label{eqn:averaging}
        z\mu_k + \frac{z}{k-1}\lambda_{n-k+2} + \ldots + \frac{z}{k-1}\lambda_n \le 0. 
    \end{equation}
    Combining \eqref{eqn:trAAbar} and \eqref{eqn:averaging}, we obtain
    \begin{equation}\label{eqn:trAAbarz}
    n \ge (\lambda_1 + \lambda_{n-k+1} + \left(1 + \frac{z}{k-1}\right)(\lambda_{n-k+2}\ldots  +\lambda_{n-1} + \lambda_n)) + (\mu_1 + \mu_2 +\ldots + \mu_{k-1} + (1 + z)\mu_k + \mu_n)
    \end{equation}
    
    By computing the traces of $A(G)^2$ and $A(\overline{G})^2$, we find 
    \begin{align}\label{eqn:trAAbar2}
n^2 &= (\lambda_1^2 + \ldots + \lambda_n^2) + (\mu_1^2 + \ldots +\mu_n^2) \nonumber \\
&\ge (\lambda_1^2 +\lambda_{n-k+1}^2 + \ldots +\lambda_{n-1}^2 + \lambda_n^2) + (\mu_1^2 + \ldots + \mu_k^2 + \mu_n^2).
    \end{align}
    From \eqref{eqn:trAAbarz} and the inequalities $\mu_1 \ge \mu_2 \ge \ldots \ge \mu_k$, we obtain
    \begin{equation}\label{eqn3}
    -(\lambda_{n-k+1} + \mu_n + \left(1 + \frac{z}{k-1}\right)(\lambda_{n-k+2} + \ldots + \lambda_n)) \ge \lambda_1 + (k+z)\mu_k - n.
    \end{equation}
    By Cauchy-Schwarz and \eqref{eqn3}, we have 
    \begin{equation}\label{eqn4}
        \lambda_{n-k+1}^2 + \ldots  + \lambda_n^2 + \mu_n^2 \ge \frac{1}{2 + (k-1)\left(1 + \dfrac{z}{k-1}\right)^2}(\lambda_1 + (k+z)\mu_k-n)^2
    \end{equation}
    Combining \eqref{eqn:trAAbar2} and \eqref{eqn4} gives the inequality
    \begin{equation}\label{eqn:lambda1mu2}
    n^2 \ge \lambda_1^2 + k\mu_k^2 + \frac{1}{2 + (k-1)\left(1 + \dfrac{z}{k-1}\right)^2}(\lambda_1 + (k+z)\mu_k - n)^2.
    \end{equation}

The inequality \eqref{eqn:lambda1mu2} is valid for any $z \ge 0$. We now make the specific choice $z = (k-1)\left(1 + \sqrt{4 - \frac{1}{k}}\right)$. Inequality~\eqref{eqn:lambda1mu2} becomes
\begin{equation}\label{eqn:lambda1mu2z}
    n^2 \ge \lambda_1^2 + k\mu_k^2 + \frac{1}{2 + (k-1)\left(2 + \sqrt{4-\frac{1}{k}}\right)^2}\left(\lambda_1 + \left(k + (k-1)\left(1 + \sqrt{4-\frac{1}{k}}\right)\right)\mu_k-n\right)^2
\end{equation}
We now solve the following optimization problem:  maximize $\lambda_1 + \mu_k$ subject to inequality \eqref{eqn:lambda1mu2z}. The maximum will be achieved on the boundary of the ellipse, so we can model this as a Lagrange multipliers problem. 

Let \[f(\lambda_1, \mu_k) = \lambda_1 + \mu_k\] and \[g(\lambda_1, \mu_k) = \lambda_1^2 + k\mu_k^2 + \frac{1}{2 + (k-1)\left(2 + \sqrt{4-\frac{1}{k}}\right)^2}\left(\lambda_1 + \left(k + (k-1)\left(1 + \sqrt{4-\frac{1}{k}}\right)\right)\mu_k-n\right)^2 - n^2.\] To make the formulas more compact, we will set $a = 2 + \sqrt{4-\frac{1}{k}}$. The Lagrange multipliers problem is to solve the equation $\nabla f = \lambda \nabla g$ subject to the condition $g = 0$. 
The equation $\nabla f = \lambda \nabla g$ gives us two equations:
\begin{align*}
1& = \lambda(2\lambda_1 + \frac{2}{2 + (k-1)a^2}(\lambda_1 + (1 + a(k-1))\mu_k - n))  \\
1&= \lambda(2k\mu_k + \frac{2(1 + a(k-1))}{2+(k-1)a^2}(\lambda_1 + (1+a(k-1))\mu_k -n)).
\end{align*}
Equating the right-hand sides of these two equations, we find
\[ 2\lambda_1 + \frac{2}{2 + (k-1)a^2}(\lambda_1 + (1 + a(k-1))\mu_k - n) = 2k\mu_k + \frac{2(1 + a(k-1))}{2+(k-1)a^2}(\lambda_1 + (1+a(k-1))\mu_k -n) \]
\[ \implies \lambda_1 = \frac{\mu_k(a^2(2k^2-3k+1) + a(k-1) + 2k) - a(k-1)n}{a^2(k-1)-ak+a+2}. \]
Substituting this expression for $\lambda_1$ into the condition $g(\lambda_1, \mu_k) = 0$ gives
a quadratic equation for $\mu_k$. Solving the quadratic, we find the solution of $\mu_k$ corresponding to the maximum is 
\[\mu_k = \frac{na}{2(ka-1)}. \]
Solving for $\lambda_1$, we find
\[\lambda_1 = \frac{(2k-1)na}{2(ka-1)}\]
Using the previous relation between $\lambda_1$ and $\mu_k$, it follows that 
\begin{align*}
    \lambda_1 + \mu_k & \le \frac{2kna}{2(ka-1)} \\
    &= n \frac{2k\left(2 + \sqrt{4 - \frac{1}{k}}\right)}{2\left(2k + k\sqrt{4 - \frac{1}{k}} - 1\right)} \\
    &=n\frac{2k + \sqrt{k(4k-1)}}{2k + \sqrt{k(4k-1)}-1}\\
    & = n\frac{k + \sqrt{k(4k-1)}}{3k-1}.
\end{align*}
\end{proof}

Substituting $k=2$ into Theorem~\ref{thm:alpha1kub} gives the following corollary.

\begin{corollary}\label{cor:alpha12}
    $\alpha_{1, 2} \le \dfrac{2+\sqrt{14}}{5} \approx 1.14833$.
\end{corollary}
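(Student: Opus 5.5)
The corollary is obtained by specializing Theorem~\ref{thm:l1lkub}, equivalently Theorem~\ref{thm:alpha1kub}, to $k=2$ and simplifying the resulting expression. For $k=2$ the theorem gives, for every looped graph $G$ on $n$ vertices,
\[\lambda_1(G)+\lambda_2(\overline G)\le n\,\frac{2+\sqrt{2\cdot 7}}{3\cdot 2-1}=n\,\frac{2+\sqrt{14}}{5}.\]
The first step is therefore just the arithmetic $k(4k-1)=14$ and $3k-1=5$.

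The second step passes from looped graphs to $\alpha_{1,2}$. Divide the inequality by $n$ and take the maximum over $\mathcal G_n^\circ$. This bounds $\alpha_{1,2}^\circ$ by $\frac{2+\sqrt{14}}{5}$ after letting $n\to\infty$. By the earlier lemma, $\alpha_{1,2}=\alpha_{1,2}^\circ$, so the same bound holds for $\alpha_{1,2}$. Alternatively, one can simply note that simple graphs are looped graphs, which gives the inequality for $\alpha_{1,2}$ directly.

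Finally, the numerical value comes from $\sqrt{14}\approx 3.741657$, which gives $(2+3.741657)/5\approx 1.148331$. There is no genuine obstacle here. The only point worth checking is that the proof of Theorem~\ref{thm:l1lkub} is valid for $k=2$. The choice $z=(k-1)(1+\sqrt{4-1/k})$ is nonnegative. The averaging step \eqref{eqn:averaging} then involves the single inequality $\mu_2+\lambda_n\le 0$, which holds by Weyl's inequality.
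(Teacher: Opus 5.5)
Your proposal is correct and matches the paper's argument, which obtains the corollary by substituting $k=2$ into Theorem~\ref{thm:alpha1kub} (Theorem~\ref{thm:l1lkub} divided by $n$), using $k(4k-1)=14$ and $3k-1=5$. One small correction to your ``alternative'' remark: the simple complement of a simple graph has adjacency matrix $J-I-A(G)$, not $J-A(G)$, so $\lambda_2(\overline G)$ is one less than in the looped setting, and this only strengthens the bound.
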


In particular, Corollary~\ref{cor:alpha12} implies $\alpha_{1, 2} < s_{0, 0} = \dfrac{2}{\sqrt{3}}\approx 1.155$, so Theorem~\ref{thm:alpha1kub} is already sufficient to separate $\alpha_{i, j}$ from $s_{i-1, j-2}$. 

We now state and prove a lemma which will enable us to determine the exact values of  $\alpha_{1, 1}$ and $\alpha_{1, 2}$.  

\begin{lemma}\label{lem:lam1kupper}
	Let $G$ be a looped graph on $n$ vertices and let $1\leq k \leq n$. If there exist nonzero vectors $y_1, \dots, y_k \in \mathbb{R}_{\geq 0}^n $ with pairwise disjoint supports such that $A(\overline G) y_r \geq \lambda_k(\overline G) y_r $ for all $1\leq r \leq k$, then 
	\begin{equation}
		\lambda_1(G)+\lambda_k(\overline G)\leq \frac{4k}{4k-1}n.
	\end{equation}
\end{lemma}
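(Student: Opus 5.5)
The plan is to combine the Perron eigenvector of $A=A(G)$ with the test vectors $y_r$, using only $A(G)+A(\overline G)=J$ and entrywise nonnegativity. Throughout, write $B=A(\overline G)=J-A$, $\lambda=\lambda_1(G)$, $\mu=\lambda_k(\overline G)$, and $S_r=\mathrm{supp}(y_r)$ with $s_r=|S_r|$. Let $x\ge 0$ be a unit Perron eigenvector of $A$ and $\sigma=\mathbf 1^Tx$, so that $\sigma^2\le n$ by Cauchy--Schwarz. We may assume $\mu>0$, since otherwise $\lambda+\mu\le n$. The extremal example of Proposition~\ref{prop:alpha1klb} (with $S_r$ the blown-up copies of $J_2$) should be kept in view, since every inequality used must be tight there.

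\emph{Step 1 (local constraints from the $y_r$).} Restricting $By_r\ge \mu y_r$ to $S_r$ and using the Collatz--Wielandt bound gives $\lambda_1(B[S_r])\ge\mu$, hence $\mu\le s_r$ and $k\mu\le\sum_r s_r\le n$. More importantly, pair the Perron equation with $y_r$:
\[
\lambda\, y_r^Tx=y_r^TAx=(\mathbf 1^Ty_r)\sigma-(By_r)^Tx\le (\mathbf 1^Ty_r)\sigma-\mu\, y_r^Tx .
\]
This yields $(\lambda+\mu)\,x^Ty_r\le \sigma\,\mathbf 1^Ty_r$ for each $r$. Summing over $r$ with $y=\sum_r y_r$ gives $(\lambda+\mu)\,x^Ty\le\sigma\,\mathbf 1^Ty$.

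\emph{Step 2 (global constraint from the Perron vector).} Write
\[
\lambda=x^TAx=\sigma^2-x^TBx\le n-x^TBx.
\]
The task is to lower-bound $x^TBx$ in terms of $\mu$ and the mass of $x$ on $U=\bigcup_r S_r$. The intended mechanism is that inside each $S_r$ the graph $\overline G$ has spectral radius at least $\mu$, so $x$ cannot be both heavy and nearly constant on $S_r$ without paying $x^TBx\gtrsim \mu\sum_r (\text{mass of }x\text{ on }S_r)^2/s_r$. Dually, if $x$ is far from the Perron direction of $B[S_r]$, then $x^Ty_r$ is small relative to $\mathbf 1^Ty_r$, which is penalized through Step 1. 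Concretely, I would rescale each $y_r$ (free, since the hypothesis is homogeneous) and use the pointwise Perron equation $\lambda x_v=\sigma-(Bx)_v$ on $U$ to relate $x_v$ to $\sigma$. This should produce an inequality of the form $\lambda\le n-c\,\mu\,\|x_U\|^2$ together with an inequality tying $\|x_U\|$, $\sigma$ and $\lambda+\mu$.

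\emph{Step 3 (optimization).} Eliminate $\sigma$, $\|x_U\|$ and $\sum_r s_r$ using $\sigma^2\le n$, $\sum_r s_r\le n$ and $k\mu\le n$. This leaves a two-variable inequality in $(\lambda,\mu)$, and maximizing $\lambda+\mu$ over it, as in the Lagrange step of Theorem~\ref{thm:l1lkub}, should give exactly $\frac{4k}{4k-1}n$. The optimum should occur at $\mu=\frac{2n}{4k-1}$ and $\lambda=\frac{(4k-2)n}{4k-1}$, matching the blow-up of $J_{2k-1}\vee\overline{kJ_2}$.

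\emph{Main obstacle.} The hard part is Step 2: obtaining a lower bound on $x^TBx$ (equivalently, an upper bound on $\lambda$) that interacts correctly with the Step 1 inequality and is tight on the extremal graph. I would first try taking the quadratic form of $B$ against the vector $x$ restricted to each $S_r$. I would compare it with $\mu\,(x_{S_r}^Ty_r)^2/\|y_r\|^2$ via the Rayleigh quotient, using that $B[S_r]$ has a nonnegative vector with ratio at least $\mu$. I would then apply Cauchy--Schwarz across the $k$ blocks, which is where the factor $k$, and hence the denominator $4k-1$, should enter. If this proves too lossy, the fallback is to choose the free scalings of the $y_r$ cleverly so that a single weighted sum of the Step 1 inequalities and $\lambda\le n-x^TBx$ closes directly.
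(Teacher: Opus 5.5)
Your Step 1 is correct and matches the first half of the paper's argument. However, the proposal has a genuine gap: Step 2, which carries all the content, is never carried out, and the mechanism you sketch for it would not work.

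The comparison $x_{S_r}^TB[S_r]x_{S_r}\ge\mu\,(x_{S_r}^Ty_r)^2/\|y_r\|^2$ does not follow from the Rayleigh principle. That principle only says $\lambda_1(M)\ge v^TMv/v^Tv$; it gives no lower bound on $v^TMv$ at an arbitrary $v$. The comparison also fails for general nonnegative vectors: if $B[S_r]$ is a single loopless edge, $y_r=(1,1)$, $\mu=1$ and $x_{S_r}=(1,0)$, the left side is $0$ and the right side is $\tfrac12$.

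More fundamentally, your own tightness test rules out this $\ell^2$ route. In the blow-up of $J_{2k-1}\vee\overline{kJ_2}$, the Perron vector takes values in ratio $2k:(2k-1)$. So for unit $x$ one gets $\sigma^2=n-\frac{n}{(4k-1)^2}<n$, and $\lambda\le n-x^TBx$ is strict by an amount linear in $n$. Likewise $\sum_r s_r=k\mu=\frac{2k}{4k-1}n$ is far from $n$. Eliminating $\sigma$ and $\sum_r s_r$ through $\sigma^2\le n$, $\sum_r s_r\le n$, $k\mu\le n$, as planned in Step 3, makes the final two-variable constraint strict at the extremal point. Maximizing $\lambda+\mu$ over that relaxed region then cannot return the sharp constant $\frac{4k}{4k-1}$.

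The missing idea is to stay linear and normalize in $\ell^1$: pair the Perron equation with $\mathbf 1$ rather than with $x$. This gives the exact identity $\mathbf 1^TBx=(n-\lambda)\sigma$. Equivalently, the deficit vector $d=\frac{\sigma}{\lambda}\mathbf 1-x=\frac1\lambda Bx\ge 0$ has total mass exactly $\frac{\sigma(n-\lambda)}{\lambda}$.

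Set $P_r=\mathbf 1^Ty_r$. Your Step 1 gives $y_r^Td\ge\frac{\sigma P_r\mu}{\lambda(\lambda+\mu)}$. The entrywise bound $\mu(y_r)_i\le(By_r)_i\le P_r$ (valid because $B$ is a $0$--$1$ matrix) gives $y_r^Td\le\frac{P_r}{\mu}\sum_{i\in S_r}d_i$. Hence $\sum_{i\in S_r}d_i\ge\frac{\sigma\mu^2}{\lambda(\lambda+\mu)}$, and summing over the disjoint supports yields
\[
(n-\lambda)(\lambda+\mu)\ge k\mu^2 .
\]
Completing the square gives $n(\lambda+\mu)-\frac{4k-1}{4k}(\lambda+\mu)^2\ge k\bigl(\mu-\frac{\lambda+\mu}{2k}\bigr)^2\ge 0$, which is the lemma. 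Every step here is an equality at the extremal blow-up, with $y_r$ the indicators of the blown-up copies of $J_2$. That is exactly what your quadratic-form plan lacks.
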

\begin{proof}
	Let $A = A(G)$ and $B = A(\overline G)$. Define $\lambda = \lambda_1(A)$ and $\mu = \lambda_k(B)$. If $\mu \leq 0$ or $\lambda = 0$, then $\lambda + \mu \leq n < \frac{4k}{4k-1}n$. We assume $\lambda, \mu >0$. 
	
	Let $x \geq 0$ be a Perron eigenvector of $A$. Define $c = \sum_{i=1}^{n} x_i$. Then
	\[
	\lambda x = Ax = (J-B)x = c\mathbf{1} - Bx \leq c\mathbf{1}.
	\]
	
	We define the deficit vector $d = \frac{c}{\lambda}\mathbf{1} - x\geq 0 $. For a fixed $1\leq r \leq k$, let $p := y_r$ and define $P = \sum_{i=1}^{n}p_i$.	Since $Bp \geq \mu p $, we have
	\begin{align*}
		\lambda p^T x &= p^T Ax\\
		&=p^T(J-B)x \\
		&=cP -p^T B x\\
		&=cP - (Bp)^T x\\
		&\leq cP - \mu p^T x.
	\end{align*}
	Therefore, $p^T x \leq \frac{cP}{\lambda + \mu}$. We consider the quantity $p^Td$. On one hand, we have
	\[
	p^T d = \sum_{i=1}^{n}p_i \left(\frac{c}{\lambda} - x_i \right) = \frac{cP}{\lambda} - p^T x \geq \frac{cP}{\lambda} - \frac{cP}{\lambda + \mu} = \frac{cP\mu}{\lambda(\lambda + \mu)}.
	\]
	On the other hand, since
	\[
	\mu p_i \leq (Bp)_i = \sum_{j=1}^{n}B_{ij}p_j \leq \sum_{j=1}^{n}p_j = P,
	\]
	we have $p_i \leq P/\mu$ and as $d \geq 0$
	\[
	p^T d = \sum_{i=1}^{n}p_i d_i  = \sum_{\text{supp}(p)}p_i d_i \leq \frac{P}{\mu}\sum_{\text{supp}(p)}d_i.
	\]
	Hence, for any $y_r$ we have
	\[
	\sum_{\text{supp}(y_r)} d_i \geq \frac{c\mu^2}{\lambda(\lambda+\mu)}.
	\]
	
	Now, since the vectors $y_1, \dots, y_k$ have pairwise disjoint support, it follows that
	\[
	\sum_{i=1}^{n} d_i \geq \frac{kc\mu^2}{\lambda(\lambda+\mu)}.
	\]
	Furthermore
	\[
	\frac{nc}{\lambda} - c = \sum_{i=1}^{n} \left(\frac{c}{\lambda} - x_i \right) = \sum_{i=1}^{n} d_i \geq \frac{kc\mu^2}{\lambda(\lambda+\mu)}.
	\]
	Since $c, \lambda, \mu > 0 $ we have
	\[
	(n- \lambda)(\lambda + \mu) \geq k\mu^2.
	\]
	
	We observe
	\begin{align*}
		&&\quad(n- \lambda)(\lambda + \mu) &\geq k\mu^2\\
		&\Leftrightarrow&\quad(n+\mu - (\lambda+\mu))(\lambda + \mu) &\geq k\mu^2\\
		&\Leftrightarrow&\quad n(\lambda+\mu) - (\lambda + \mu)^2 &\geq k\mu^2 - \mu(\lambda+\mu)\\
		&\Leftrightarrow&\quad n(\lambda+\mu) - \frac{4k-1}{4k}(\lambda+\mu)^2&\geq k\left(\mu - \frac{\lambda+\mu}{2k}\right)^2\\
		&\Rightarrow&\quad n(\lambda+\mu) - \frac{4k-1}{4k}(\lambda+\mu)^2&\geq 0.
	\end{align*}
	Thus
	\[
	\lambda+\mu \leq \frac{4k}{4k-1}n.
	\]
\end{proof}

An immediate consequence of Lemma~\ref{lem:lam1kupper} is a short new proof of the fact that $\alpha_{1, 1} = \frac43$. 
\begin{theorem}\label{thm:alpha11}
	For any looped graph $G$ on $n$ vertices, we have
	\[
	\lambda_1(G) + \lambda_1(\overline G) \leq \frac{4}{3}n.
	\]
\end{theorem}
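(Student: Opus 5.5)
We will deduce the statement directly from Lemma~\ref{lem:lam1kupper} with $k=1$. To do so we need a single nonzero vector $y_1\in\mathbb{R}^n_{\ge 0}$ with $A(\overline G)y_1\ge \lambda_1(\overline G)y_1$. The natural choice is a Perron eigenvector of $A(\overline G)$. Since $A(\overline G)$ is a symmetric nonnegative matrix (loops allowed, so its diagonal entries are $0$ or $1$), the Perron--Frobenius theorem gives a nonnegative eigenvector $y_1\neq 0$ with $A(\overline G)y_1=\lambda_1(\overline G)y_1$. This is needed even when $\overline G$ is disconnected: we simply take a Perron vector of a component attaining the spectral radius and extend it by zeros. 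Equality trivially implies the required componentwise inequality.

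The disjoint-support condition is vacuous for a single vector. Lemma~\ref{lem:lam1kupper} then yields
\[
\lambda_1(G)+\lambda_1(\overline G)\le \frac{4\cdot 1}{4\cdot 1-1}\,n=\frac43 n .
\]

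There is essentially no obstacle. The only point to check is that the Perron eigenvector exists and is nonnegative for a possibly reducible nonnegative matrix. This is standard: $\lambda_1$ of a nonnegative symmetric matrix equals its spectral radius and admits a nonnegative eigenvector.

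For the simple-graph version in Theorem~\ref{thm:alpha11exact}, we would then have to sharpen $\frac43 n$ to $\frac43 n-1$, presumably by redoing the lemma's computation with the zero diagonal in mind. That refinement is not part of the present statement.
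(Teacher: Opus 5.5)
Your proposal is correct and matches the paper's proof: you apply Lemma~\ref{lem:lam1kupper} with $k=1$, taking $y_1$ to be a nonnegative Perron eigenvector of $A(\overline G)$, which exists by Perron--Frobenius. Your remark about the disconnected case is a harmless extra detail that the paper leaves implicit.
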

\begin{proof}
    Let $B = A(\overline G)$. By Perron-Frobenius, there exists a vector $p\geq 0$ such that $Bp = \mu p$. Hence, the hypotheses of Lemma~\ref{lem:lam1kupper} are satisfied, and we conclude $\lambda_1(G) + \lambda_1(\overline G) \leq \frac{4}{3}n.$
\end{proof}
\begin{remark}
    Lemma~\ref{lem:lam1kupper} is stated for looped graphs. If one walks through the same argument assuming that the graph $G$ is simple, then the obtained upper bound is
    \[\lambda_1(G) + \lambda_k(\overline{G}) \le \frac{4k}{4k-1}n - 1,\]
    under the hypotheses of Lemma~\ref{lem:lam1kupper}. 
    Setting $k=1$ recovers Terpai's theorem.  
\end{remark}
We also use Lemma~\ref{lem:lam1kupper} to determine $\alpha_{1, 2}$ exactly. 
\begin{theorem}
	For any looped graph $G$ on $n\geq 2$ vertices, we have
	\[
	\lambda_1(G) + \lambda_2(\overline G) \leq \frac{8}{7}n.
	\]
\end{theorem}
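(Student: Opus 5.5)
The plan is to apply Lemma~\ref{lem:lam1kupper} with $k=2$. Its conclusion is exactly $\lambda_1(G)+\lambda_2(\overline G)\le \frac{8}{7}n$, so the whole task is to produce two nonzero nonnegative vectors $y_1,y_2$ with disjoint supports and $By_r\ge \mu y_r$. Here $B=A(\overline G)$ and $\mu=\lambda_2(B)$. We may assume $\mu>0$, since otherwise $\lambda_1(G)+\mu\le n$ and there is nothing to prove.

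\emph{Step 1 (reducible case).} Write $\overline G$ as a union of connected components; loops are allowed. Since $\mu>0$, at least one component has positive spectral radius, and the Perron vectors of the components have disjoint supports. Suppose $\overline G$ has two components $C_1,C_2$ with $\lambda_1(C_r)\ge\mu$. Then their Perron vectors $y_1,y_2$, extended by zero, satisfy $By_r=\lambda_1(C_r)y_r\ge\mu y_r$, and the lemma applies. Since the spectrum of $B$ is the union of the component spectra, the only way this can fail is that some component $C$ contains both $\lambda_1(B)$ and $\lambda_2(B)$ as its top two eigenvalues. This reduces us to the case where $\mu=\lambda_2(C)$ for a connected (irreducible) block $C$.

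\emph{Step 2 (irreducible case).} Let $v$ be an eigenvector of $B|_C$ for $\mu$, so $v\perp$ the Perron vector of $C$. Hence $v$ has both positive and negative entries. Set $y_1=v^+$ and $y_2=v^-$, which are nonnegative, nonzero, and disjointly supported. For $i$ with $v_i>0$,
\[
(By_1)_i=(Bv)_i+(Bv^-)_i=\mu v_i+(Bv^-)_i\ge \mu v_i=\mu (y_1)_i,
\]
because $B$ and $v^-$ are nonnegative. For $i$ outside the support of $y_1$, we have $(By_1)_i\ge 0=\mu(y_1)_i$. The symmetric argument, using $Bv^-=Bv^+-Bv=Bv^+ +\mu(-v)$, gives $By_2\ge\mu y_2$ coordinatewise.

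This shows that in fact no case split is needed. Taking any eigenvector $v$ of $B$ for $\mu$ orthogonal to a Perron vector of $B$ works directly, provided $v$ has entries of both signs. The only subtle point, and what I expect to be the main obstacle, is ensuring that such a $v$ exists. Two situations need care.

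The first is when $\mu=\lambda_1(B)$ has multiplicity at least two. Choose $v$ in the eigenspace orthogonal to a nonnegative Perron vector $x_0$. Then $v^Tx_0=0$ with $x_0\ge0$ forces $v$ to change sign, unless $v$ is supported off the support of $x_0$. In that degenerate case, $v$ restricted to its own components is itself a nonnegative eigenvector disjoint from $x_0$, and we instead take $y_1=x_0$ and $y_2=|v|$ restricted appropriately. This is essentially Step 1.

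The second is to check that, generally, either $v$ changes sign or its support is disjoint from that of $x_0$ in a way that yields two disjoint nonnegative vectors. If $v\ge0$ is nonzero, then $v^Tx_0=0$ forces disjoint supports, and the pair $(x_0,v)$ works because each satisfies $B(\cdot)=\lambda(\cdot)\ge\mu(\cdot)$.

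In all cases the hypotheses of Lemma~\ref{lem:lam1kupper} with $k=2$ hold, giving $\lambda_1(G)+\lambda_2(\overline G)\le\frac{8}{7}n$. The hypothesis $n\ge2$ just guarantees that $\lambda_2$ is defined.
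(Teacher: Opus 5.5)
Your Steps 1 and 2 are the paper's proof: apply Lemma~\ref{lem:lam1kupper} with $k=2$, using the Perron vectors of two components when two components of $\overline G$ have spectral radius at least $\mu$, and otherwise splitting a $\mu$-eigenvector of the unique such component as $v=v^+-v^-$ (your claim that $v$ is orthogonal to that component's positive Perron vector needs the Perron root of a connected block to be simple, which the paper states explicitly as $\lambda_1(B_1)>\mu$). Your closing observation that the component split can be skipped is also correct and slightly streamlines the paper's case analysis: take a $\mu$-eigenvector $v\perp x_0$ and use $(v^+,v^-)$, or, if $v$ has constant sign, the disjointly supported pair $(x_0,|v|)$.
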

Proposition~\ref{prop:alpha1klb} shows that this bound is tight and $\alpha_{1, 2} = \frac87$.  
\begin{proof}
	Let $A = A(G)$ and $B = A(\overline G)$. Define $\lambda = \lambda_1(A)$ and $\mu = \lambda_2(B)$.
	
	Suppose $\overline G$ has two distinct connected components $C_1, C_2 \subseteq \overline G$, with corresponding principal submatrices $B_1, B_2$ such that $\lambda_1(B_1) \geq \mu$ and $\lambda_1(B_2) \geq \mu$. Let $p, q \geq 0$ be Perron eigenvectors of $B_1$ and $B_2$ respectively. Extend $p$ and $q$ by zero outside their respective components. Then $Bp = \lambda_1(B_1)p \geq \mu p$, $Bq = \lambda_1(B_2) q \geq \mu q$ and $\text{supp}(p) \cap \text{supp}(q) = \emptyset$.
	
	Otherwise, $\overline G$ has exactly one connected component $C_1 \subseteq \overline G$ with corresponding principal submatrix $B_1$ such that $\lambda_1(B_1) \geq \mu$. Since $\mu = \lambda_2(B)$ and the Perron eigenvalues of all other connected components of $\overline G$ are strictly less than $\mu$, we have that $\mu$ is an eigenvalue of $B_1$. We show that $\lambda_1(B_1) > \mu$. Suppose $\lambda_1(B_1) = \mu$. Then $B_1$ would only contribute one eigenvalue at least $\mu$ to the spectrum of $B$ as $C_1$ is connected and its Perron eigenvalue is simple. However, the Perron eigenvalues of all other components of $\overline G$ are strictly less than $\mu$, a contradiction.
	
	Let $x>0$ be a Perron eigenvector of $B_1$ and let $y$ be an eigenvector of $B_1$ associated with $\mu$. Since $\lambda_1(B_1) > \mu$ we have $y^Tx = 0$ which implies $y$ has both positive and negative entries. We decompose $y$ such that $y = y^+ - y^-$ where $y_i^+ = \max\{y_i, 0\}$ and $ y_i^- = \max\{-y_i, 0\}$. Let $p = y^+$, $q = y^-$ and extend $p, q$ by zero outside of $C_1$. We observe
	
	\[
		Bp - Bq = B(p - q) = \mu(p - q)  = \mu p - \mu q.
	\]
	
	If $p_i >0$, then $q_i = 0 $ and so $(Bp)_i - \mu p_i = (Bq)_i \geq 0$. If $p_i = 0$, then $(Bp)_i \geq 0 = \mu p_i$. Therefore, $Bp \geq \mu p$. A similar argument yields $Bq \geq \mu q$. Note that $\text{supp}(p) \cap \text{supp}(q) = \emptyset$. 
    
    In both cases, the hypotheses of Lemma~\ref{lem:lam1kupper} are satisfied, and we conclude $\lambda_1(G) + \lambda_2(\overline G) \leq \frac{8}{7}n.$
    \end{proof} 
\section{Bounds on \texorpdfstring{$\beta_{i, j}$}{}}

We first use Weyl's inequalities to relate the value of $\beta_{i, j}$ to the $(j-1, i-1)$-spread and the $(j, i-1)$-spread.

\begin{lemma}\label{lem:betaweyl}
    For any fixed $i,j\geq 1$, we have
    \[-s_{j-1,i-1} \leq \beta_{i,j}\leq -s_{j,i-1}.\]
\end{lemma}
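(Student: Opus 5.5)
The plan is to mirror the proof of Lemma~\ref{lem:s_ub}: apply Weyl's inequalities to the complement relation $A(\overline G)=J-A(G)$. Here we need the \emph{bottom} eigenvalues of $\overline G$ rather than the top ones. Since $J$ is positive semidefinite of rank one, adding it to $-A(G)$ interlaces the spectrum. For every index $m$ (with the upper bound requiring $m\ge 2$),
\[
\lambda_m(-A(G)) \le \lambda_m(J-A(G)) \le \lambda_{m-1}(-A(G)).
\]
We also have $\lambda_m(-A(G))=-\lambda_{n-m+1}(G)$. Taking $m=n-j+1$ (note $m\ge 2$ once $n>j$), this gives, for every graph $G$ on $n$ vertices,
\[
-\lambda_j(G)\le \lambda_{n-j+1}(\overline G)\le -\lambda_{j+1}(G).
\]
Adding $\lambda_{n-i+1}(G)$ to all three terms yields
\[
-\bigl(\lambda_j(G)-\lambda_{n-i+1}(G)\bigr)\le \lambda_{n-i+1}(G)+\lambda_{n-j+1}(\overline G)\le -\bigl(\lambda_{j+1}(G)-\lambda_{n-i+1}(G)\bigr).
\]
In the spread notation, the left side is $-\spread_{j-1,i-1}(G)$ and the right side is $-\spread_{j,i-1}(G)$.

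For the lower bound, the left inequality holds for every $G\in\mathcal G_n$. Since $\spread_{j-1,i-1}(G)\le \spread_{j-1,i-1}(n)$, the minimum defining $\beta_{i,j}$ at size $n$ is at least $-\spread_{j-1,i-1}(n)$. Dividing by $n$ and letting $n\to\infty$ gives $\beta_{i,j}\ge -s_{j-1,i-1}$.

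For the upper bound, take a sequence $(G_n)$ with $\spread_{j,i-1}(G_n)=\spread_{j,i-1}(n)$. The right inequality applied to $G_n$ shows that the minimum at size $n$ is at most $-\spread_{j,i-1}(n)$. Dividing by $n$ and passing to the limit gives $\beta_{i,j}\le -s_{j,i-1}$.

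Both limits exist: those of $\beta_{i,j}$ by Nikiforov's result, and those of the spreads by the work of Brooks, Linz and Lu. The loop issue does not arise, since both $\beta_{i,j}$ and $\spread(n)$ are taken over simple graphs (and in any case loops change things only by $O(1)$, as in the lemma on $\alpha^\circ$ and $\beta^\circ$).

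The only point requiring care, and the main obstacle, is the index bookkeeping in Weyl's inequality. One must check that the shift produced by the rank-one matrix $J$ goes in the direction giving $\lambda_{j+1}$ on the upper side and $\lambda_j$ on the lower side, and not the reverse. Beyond that, the argument is routine.
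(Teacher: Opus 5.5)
Your proposal is correct and is essentially the paper's proof: apply Weyl's inequality to $A(\overline G)=J-A(G)$ to get $-\lambda_j(G)\le\lambda_{n-j+1}(\overline G)\le-\lambda_{j+1}(G)$, then add $\lambda_{n-i+1}(G)$; your write-up also spells out the passage to the limit, which the paper leaves implicit. One small correction: for simple graphs the identity is $A(\overline G)=J-I-A(G)$, so your remark that ``the loop issue does not arise'' is not quite right, but this only shifts the Weyl bounds by $-1$ and vanishes after dividing by $n$, exactly as your parenthetical says.
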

\begin{proof}
    By Weyl's inequalities, we have
    \[ -\lambda_j(G)\leq \lambda_{n+1-j}(\overline G) \leq -\lambda_{j+1(G)}. \]
    This implies
    \[ -(\lambda_j(G)-\lambda_{n+1-i}(G))\leq \lambda_{n+1-i}(G) + \lambda_{n+1-j}(\overline G) \leq -(\lambda_{j+1(G)}-\lambda_{n+1-i}(G)) . \]
\end{proof}

We now prove Theorem~\ref{thm:betaij}, which states that  \[|\beta_{i, j}| \le \frac12 \sqrt{\frac{1}{i} + \frac{1}{j}}.\]
Note that Theorem~\ref{thm:betaij} is a strengthening of Theorem~\ref{thm:genijub}. Indeed, by Lemma~\ref{lem:betaweyl}, we obtain \[s_{j, i-1}\le |\beta_{i, j}| \le \frac12 \sqrt{\frac{1}{i} + \frac{1}{j}},\] which recovers Theorem~\ref{thm:genijub}. Corollary~\ref{cor:betakk} in the introduction is obtained by setting $i=j=k$ in Theorem~\ref{thm:betaij} and using the same Hadamard-matrix based construction as for the $(k, k-1)$-spread described in the discussion after Theorem~\ref{thm:allk}. Thus, the exact value of $\beta_{k, k}$ is determined for infinitely many $k$. 

We now give the proof of Theorem~\ref{thm:betaij}. The proof is similar to the proof of Theorem~\ref{thm:alpha1kub}.

\begin{proof}[Proof of Theorem~\ref{thm:betaij}]
Let the eigenvalues of $G$ be denoted by $\lambda_1 \ge \ldots \ge \lambda_n$ and the eigenvalues of $\overline{G}$ be denoted by $\mu_1 \ge \ldots \ge \mu_n$. We may assume that $\lambda_{n-i+1} < 0$ and $\mu_{n-j+1} < 0$. Indeed, if $\lambda_{n-i+1} \ge 0$, then we have that \[\lambda_{n-i+i} + \mu_{n-j+1} \ge \mu_{n-j+1} \ge -\frac{n}{2\sqrt{j}} - 1 \ge -\frac{n}{2} \sqrt{\frac{1}{i} + \frac{1}{j}} - 1,\]
where the second inequality above follows from a result of Nikiforov~\cite[Theorem 2.6]{Nikiforov2015}. Dividing through by $n$ and taking limits gives the desired inequality. A similar argument shows that we may assume $\mu_{n-j+1} < 0$. 

Using that $n = \tr(A(G)) + \tr(A(\overline{G}))$, we have
\[ n = (\lambda_1 + \ldots + \lambda_n) + (\mu_1 + \ldots + \mu_n). \]
By Weyl's inequality, we have $\lambda_{k+2} + \mu_{n-k} \le \lambda_2(J) = 0$ for $0\le k\le n-2$. Let $z$ and $w$ be real numbers with $0\le z \le 1$ and $0\le w\le 1$. It follows from the trace equality and Weyl's inequality that
\[
 n \le (\lambda_1 + w\lambda_2 + \ldots + w\lambda_{j+1}) + (z\lambda_{n-i+1} + \ldots + z\lambda_n) + (\mu_1 + z\mu_2 + \ldots + z\mu_{i+1}) + (w\mu_{n-j+1} + \ldots + w\mu_n).
\]
This implies
\begin{equation}\label{eqn:betaijtr}
\lambda_1 + \mu_1 + w(\lambda_2 + \ldots + \lambda_{j+1}) + z(\mu_2 + \ldots + \mu_{i+1}) \ge n - z(\lambda_{n-i+1} + \ldots + \lambda_{n}) -w(\mu_{n-j+1} + \ldots + \mu_n).
\end{equation}
Now we use the identity $n^2 = \tr(A^2(G)) + \tr(A^2(\overline{G}))$ to obtain the inequality
\[
    n^2 \ge \lambda_1^2 + \lambda_2^2 + \ldots + \lambda_{j+1}^2 + \lambda_{n-i+1}^2 + \ldots + \lambda_n^2 + \mu_1^2 + \ldots + \mu_{i+1}^2 + \mu_{n-j+1}^2 + \ldots + \mu_n^2.
\]
By Cauchy-Schwarz and \eqref{eqn:betaijtr}, we obtain

\begin{equation}\label{eqn:n^2ineq}
n^2 \ge \frac{(n-zi\lambda_{n-i+1}-wj\mu_{n-j+1})^2}{2 + jw^2+iz^2}+i\lambda_{n-i+1}^2 + j\mu_{n-j+1}^2.
\end{equation}
We divide through by $n^2$ and set $x:=-\frac{\lambda_{n-i+1}}{n}, y:=-\frac{\mu_{n-j+1}}{n}$. Then \eqref{eqn:n^2ineq} becomes
\begin{equation}\label{eqn:wzineq}
1 \ge \frac{(1+zix+wjy)^2}{2 + jw^2+iz^2}+ix^2 + jy^2.
\end{equation}
We maximize $x+y$ subject to inequality \eqref{eqn:wzineq}. For fixed $w$ and $z$, the maximum of $x + y$ will be achieved on the boundary of the ellipse, so we can treat \eqref{eqn:wzineq} as an equality. We now make the specific choices $z = \sqrt{\frac{j}{i(i+j)}}$ and $w = \sqrt{\frac{i}{j(i+j)}}$. Then note 
\begin{equation}\label{eqn:zwij}
jw^2 + iz^2 = 1 \text{ and } zi = wj = \sqrt{\frac{ij}{i+j}}.
\end{equation}Let $f(x, y) = x + y$ and $g(x, y)= \frac{\left(1 + \sqrt{\frac{ij}{i+j}}(x+y)\right)^2}{3} + ix^2+jy^2 -1$, so that the optimization problem is now: maximize $f(x, y)$ subject to the condition $g(x, y) = 0$. Using Lagrange multipliers, from the gradient equation $\nabla f = \lambda \nabla g$, we obtain the two equations 
\[1 = \lambda\left(\frac{2iz(1+izx + jwy)}{2+jw^2 + iz^2} + 2ix\right)\]
\[1 = \lambda\left(\frac{2jw(1+izx + jwy)}{2+jw^2 + iz^2} + 2jy\right)\]
Setting the right-hand sides of these equations equal to each other and using the relations in \eqref{eqn:zwij}, we obtain that
\[ix = jy.\]
Substituting $y = \frac{ix}{j}$ into the equation $g(x, y) = 0$ gives
\[x = \frac12\sqrt{\frac{j}{i(i+j)}}.\]
Hence,
\[ y = \frac{i}{j} x = \frac12\sqrt{\frac{i}{j(i+j)}}. \]
Thus, we obtain the upper bound
\[x + y \le \frac12\sqrt{\frac{j}{i(i+j)}} + \frac12\sqrt{\frac{i}{j(i+j)}} = \frac12\sqrt{\frac{1}{i} + \frac{1}{j}}, \]
completing the proof. 
\end{proof}

\section{Concluding remarks}
We have proven in this paper that $\alpha_{1, 1} = \frac43$ and $\alpha_{1, 2} = \frac87$. We conjecture that the lower bound on $\alpha_{1, k}$ given in Proposition~\ref{prop:alpha1klb} is tight for all $k\ge 3$. 

\begin{conjecture}\label{con:k>=3}
    \[\alpha_{1,k}=\frac{4k}{4k-1}.\] The maximum is achieved by a blowup of $G=\overline{kJ_2\cup (2k-1)K_1}=J_{2k-1}\vee \overline{kJ_2}$.
\end{conjecture}

By Theorem~\ref{thm:alpha1kub}, we have

\[ \frac{4k}{4k-1}\leq \alpha_{1,k}\leq \frac{k + \sqrt{k(4k-1)}}{3k-1}.\]

As an asymptotic comparison, for large $k$ we have \[\frac{k + \sqrt{k(4k-1)}}{3k-1} = 1 + \frac{1}{4k} + \frac{5}{64k^2} + \frac{13}{512k^3} + O\left(\frac{1}{k^4}\right).\]

It would be interesting to determine $\alpha_{i, j}$, $\beta_{i, j}$ and $s_{i, j}$ for more pairs $(i, j)$. For the $(i, j)$-spread problem, Brooks, Linz and Lu~\cite{BrooksLinzLu} gave tables with the then-best known lower bounds for $s_{i, j}$. We update the table of best-known lower bounds for $s_{i, j}$ and create similar tables for $\alpha_{i, j}$ and $\beta_{i, j}$. The complete tables are given in an appendix. Here, we mention a few graphs which improve over the previous best-known constructions for $s_{2, 3}$ and $s_{2, 2}$. 

\begin{proposition}
    \[\alpha_{3, 4}, s_{2, 2} \ge 0.4467; \quad \alpha_{3, 5} \ge 0.4219; \quad s_{2, 3} \ge 0.4278.  \]
\end{proposition}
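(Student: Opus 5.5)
The statement consists of explicit lower bounds, so the plan is to exhibit, for each quantity, one small looped graph $H$ on $m$ vertices and pass to its $t$-blowups $H(t)$. The facts about blowups I will rely on are the following. The spectrum of $H(t)$ is $t$ times the spectrum of $H$, together with $m(t-1)$ extra zeros. Complementation commutes with blowup: $\overline{H(t)}=\overline{H}(t)$, because $A(H(t))=A(H)\otimes J_t$ and $J_{mt}=J_m\otimes J_t$. Hence, as long as the relevant eigenvalues of $H$ and $\overline H$ have the right signs, so that the inserted zeros do not shift the indices, the normalized quantities $(\lambda_i(H(t))+\lambda_j(\overline{H(t)}))/(mt)$ and $(\lambda_{i+1}-\lambda_{n-j})(H(t))/(mt)$ equal the corresponding values for $H$ divided by $m$. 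Finally, by the lemma that $\alpha_{i,j}=\alpha^\circ_{i,j}$ (and its analogue for spreads from \cite{BrooksLinzLu}), looped constructions suffice.

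To economize, I would look for a \emph{regular} looped graph $H$ such that
\[
\frac{\lambda_3(H)-\lambda_{m-2}(H)}{m}\ge 0.4467,
\]
with $\lambda_3(H)>0>\lambda_{m-2}(H)$. The blowups $H(t)$ are again regular, so this gives $s_{2,2}\ge 0.4467$ directly. By Lemma~\ref{lem:reg}, $\lambda_4(\overline{H(t)})=-\lambda_{n-2}(H(t))$ when that value is nonnegative; this is exactly the equality case in the proof of Lemma~\ref{lem:s_ub}. It therefore also gives $\lambda_3+\lambda_4(\overline{\,\cdot\,})$ equal to the same value, i.e.\ $\alpha_{3,4}\ge 0.4467$.

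For $\alpha_{3,5}$ and $s_{2,3}$ I would use separate graphs, regular if possible, for the same reason. For $s_{2,3}$ I need $(\lambda_3-\lambda_{m-3})(H)/m\ge0.4278$. For $\alpha_{3,5}$ I compute $\lambda_3(H)+\lambda_5(\overline H)$ directly, or via Lemma~\ref{lem:reg} as $\lambda_3(H)-\lambda_{m-3}(H)$ if $H$ is regular. Natural candidates are Cayley-type, strongly regular or Hadamard-type looped graphs in the spirit of $\tfrac12(K\otimes H+J)$, or joins and complements of small regular pieces, with the sizes tuned. In each case the verification reduces to listing the spectrum of an explicit matrix; for Cayley graphs on abelian groups this is available in closed form via characters.

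The main obstacle is not the verification but \emph{finding} graphs that beat the previous records; the decimals suggest the certificates were found by computer search. I would therefore run a search over small looped regular (or vertex-transitive) graphs, optimizing the relevant normalized eigenvalue combination, then record the adjacency matrix and check numerically that the stated constants are attained.
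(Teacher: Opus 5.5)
Your reduction is correct and matches the framework the paper relies on. $A(H(t))=A(H)\otimes J_t$, so blowups commute with complementation and scale the nonzero spectrum by $t$. For regular $H$, Lemma~\ref{lem:reg} turns $\lambda_3(H)-\lambda_{m-2}(H)$ into $\lambda_3(H)+\lambda_4(\overline H)$.

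The gap is that this framework is all the proposal contains. The proposition asserts four explicit numerical lower bounds, and its whole content is the existence of concrete finite graphs with concrete spectra; ``I would run a search'' establishes none of them. The paper supplies exactly the certificates you are missing. The first is the looped graph $G_1$ on $15$ vertices of Figure~\ref{fig:G1}, whose blowups give $\alpha_{3,4},s_{2,2}\ge 0.4467$. The second is the looped graph $G_2$ on $13$ vertices (complement in Figure~\ref{fig:G2bar}), whose blowups give $\alpha_{3,5}\ge 0.4219$ and $s_{2,3}\ge 0.4278$. In both cases the bounds are read off from the eigenvalues of these explicit matrices. Without an explicit adjacency matrix and its spectrum, your argument proves nothing.

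Your restriction to regular witnesses is also a likely failure point rather than an economy. Neither of the paper's graphs is regular: $G_1$ has degrees $7$ and $8$, and $G_2$ has degrees $6$ and $7$. For a regular $H$, Lemma~\ref{lem:reg} gives $\lambda_3(H)+\lambda_5(\overline H)=\lambda_3(H)-\lambda_{m-3}(H)$. So a single regular graph yields the same constant for $\alpha_{3,5}$ and $s_{2,3}$. The two different constants in the statement therefore come from a non-regular graph. Moreover, a regular witness for $s_{2,3}\ge 0.4278$ would automatically give $\alpha_{3,5}\ge 0.4278$, beating the best bound the paper reports. Nothing suggests that a search confined to regular graphs reaches these numbers.

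You should instead search over general (nearly regular) looped graphs. For a non-regular witness, compute $\lambda_3(H)+\lambda_5(\overline H)$ and $\lambda_3(H)-\lambda_{m-3}(H)$ separately, and check the sign conditions so that blowups do not shift indices. Regularity is not needed for the first pair either. By Weyl, $\lambda_3(H)+\lambda_4(\overline H)\le \lambda_3(H)-\lambda_{m-2}(H)$ for every graph, so any witness for $\alpha_{3,4}\ge 0.4467$ is automatically a witness for $s_{2,2}\ge 0.4467$.
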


 The current lower bounds of $\alpha_{3,4}$ and $s_{2,2}$ are achieved by $t$-blowups of the graph $G_1$ shown in Figure~\ref{fig:G1}.  The current lower bounds of $\alpha_{3,5}$ and $s_{2,3}$ are achieved by $t$-blowups of the graph $G_2$ whose complement $\overline{G_2}$ is shown in Figure~\ref{fig:G2bar}. Notice that the looped vertices induce a looped version of $Q_3$, the cube graph.

For both $G_1$ and $G_2$, all degrees are as close as possible to half the number of vertices. $G_1$ has 15 vertices and each of them has degree either 7 or 8. Similarly, $G_2$ has 13 vertices and each of them has degree either 6 or 7. From the matrix perspective, this means that each row of the adjacency matrices has as close as possible to the same number of ones and zeros.

\begin{figure}[h!]
    \centering
    \begin{tikzpicture}[scale=2, vertex/.style={scale=1, circle, draw=black, fill=black},
	wvertex/.style={scale=1, circle, draw=black, fill=white}]

  \node[vertex] (a1) at (120:2) {};
  \node[vertex] (a3) at (105:2){};
  \node[vertex] (a2) at (75:2) {};
  \node[wvertex] (a4) at (90:2) {};
  \node[wvertex] (a5) at (60:2) {};

  \node[vertex] (b1) at (0:2) {};
  \node[vertex] (b3) at (-15:2) {};
  \node[vertex] (b2) at (-45:2) {};
  \node[wvertex] (b4) at (-30:2) {};
  \node[wvertex] (b5) at (-60:2) {};

  \node[vertex] (c1) at (-120:2) {};
  \node[vertex] (c3) at (-135:2) {};
  \node[vertex] (c2) at (-165:2) {};
  \node[wvertex] (c4) at (-150:2) {};
  \node[wvertex] (c5) at (-180:2) {};

\foreach \u/\v in {
  a1/b2,a1/b4,a1/b5,
  a2/b1,a2/b3,
  a3/b2,a3/b4,a3/b5,
  a4/b1,a4/b3,a4/b5,
  a5/b2,a5/b4,a5/b5} \draw (\u) -- (\v);

\foreach \u/\v in {
  b1/c2,b1/c4,b1/c5,
  b2/c1,b2/c3,
  b3/c2,b3/c4,b3/c5,
  b4/c1,b4/c3,b4/c5,
  b5/c2,b5/c4,b5/c5} \draw (\u) -- (\v);

\foreach \u/\v in {
  c1/a2,c1/a4,c1/a5,
  c2/a1,c2/a3,
  c3/a2,c3/a4,c3/a5,
  c4/a1,c4/a3,c4/a5,
  c5/a2,c5/a4,c5/a5} \draw (\u) -- (\v);

  \foreach \x in {a,b,c} {
    \draw (\x1) -- (\x3);
    \draw (\x2) -- (\x4);
    \draw (\x2) -- (\x5);
  }

\end{tikzpicture}
    \caption{$G_1$, where the vertices with self-loops are colored in black.}
    \label{fig:G1}
\end{figure}

\begin{figure}[h!]
    \centering
    \begin{tikzpicture}[
    scale=3,
    vertex/.style={scale=1,circle,draw=black,fill=black},
    wvertex/.style={scale=1,circle,draw=black,fill=white}
]


\node[vertex] (5)  at (0,0) {};
\node[vertex] (7)  at (1,0) {};
\node[vertex] (11)  at (0,1) {};
\node[vertex] (8)  at (1,1) {};

\node[vertex] (9)  at (0.35,0.35) {};
\node[vertex] (6) at (1.35,0.35) {};
\node[vertex] (10) at (0.35,1.35) {};
\node[vertex] (12) at (1.35,1.35) {};


\node[wvertex] (1) at (2.70,1.55) {};
\node[wvertex] (0) at (2.45,1.05) {};
\node[wvertex] (4) at (2.45,0.25) {};
\node[wvertex] (3) at (2.70,-0.25) {};
\node[wvertex] (2) at (2.25,0.65) {};

\draw (0)--(1);
\draw (0)--(2);
\draw (0)--(3);
\draw (0)--(5);
\draw (0)--(6);
\draw (0)--(11);
\draw (0)--(12);

\draw (1)--(3);
\draw (1)--(4);
\draw (1)--(7);
\draw (1)--(9);
\draw (1)--(11);
\draw (1)--(12);

\draw (2)--(4);
\draw (2)--(7);
\draw (2)--(8);
\draw (2)--(9);
\draw (2)--(10);

\draw (3)--(4);
\draw (3)--(5);
\draw (3)--(6);
\draw (3)--(8);
\draw (3)--(10);

\draw (4)--(5);
\draw (4)--(6);
\draw (4)--(11);
\draw (4)--(12);

\draw (5)--(7);
\draw (5)--(9);
\draw (5)--(11);

\draw (6)--(7);
\draw (6)--(9);
\draw (6)--(12);

\draw (7)--(8);

\draw (8)--(11);
\draw (8)--(12);

\draw (9)--(10);

\draw (10)--(11);
\draw (10)--(12);

\end{tikzpicture}
    \caption{$\overline{G_2}$, where the vertices with self-loops are colored in black.}
    \label{fig:G2bar}
\end{figure}

\section*{Acknowledgement}
ChatGPT 5.5 was used to help with several of the computations. ChatGPT 5.5 Pro also generated the proof idea for a special case of Lemma~\ref{lem:lam1kupper}. All of the writing was done by the authors, and we take full responsibility for the contents of the paper.


\bibliographystyle{plain}
\bibliography{NDreferences}

\newpage

\section{Appendix}

\subsection{Tables for \texorpdfstring{$\alpha_{i, j}$}{}}
\setlength{\tabcolsep}{2pt}
Here are the best known bounds for $\alpha_{i, j}$. 
\begin{table}[h!]
    \centering
    \begin{tabular}{|C{0.8cm}||*{5}{C{3cm}|}}
        \hline
        \diagbox[innerleftsep=4pt, innerwidth = 20pt]{$i$}{$j$} & 1 & 2 & 3 & 4 & 5 \\
        \hline\hline
        1 & $4/3$ & 8/7 & \mt{1.0909}{1.0931} & \mt{1.0667}{1.0678} & \mt{1.0526}{1.0533}\\
        \hline
        2 & 8/7 & $1/\sqrt{2}$  & \mt{0.6000}{0.6124} & \mt{0.5714}{0.5774} & \mt{0.5556}{0.5590}\\
        \hline
        3 & \mt{1.0909}{1.0931} & \mt{0.6000}{0.6124} & $1/2$  & \mt{0.4467}{0.4564} & \mt{0.4219}{0.4330}\\
        \hline
        4 & \mt{1.0667}{1.0678} & \mt{0.5714}{0.5774} & \mt{0.4467}{0.4564} & \mt{0.4041}{0.4082}& \mt{0.3678}{0.3819}\\
        \hline
        5 & \mt{1.0526}{1.0533} & \mt{0.5556}{0.5590} & \mt{0.4219}{0.4330} & \mt{0.3678}{0.3819}  & $\sqrt{2}/4$\\
        \hline
    \end{tabular}
    \caption{Known lower and upper bounds for $\alpha_{i,j}$.}
    \label{tab:boundsalpha}
\end{table}

\begin{table}[h!]
    \centering
    \begin{tabular}{|C{0.8cm}||*{5}{C{3cm}|}}
        \hline
        \diagbox[innerleftsep=4pt, innerwidth = 20pt]{$i$}{$j$} & 1 & 2 & 3 & 4 & 5 \\
        \hline\hline\xrowht{0.75cm}
        1\,\, & $J_1 \vee \overline{J_2}$ & $J_3 \vee \overline{2J_2}$ & $J_5 \vee \overline{3J_2}$  & $J_7 \vee \overline{4J_2} $ & $J_9 \vee \overline{5J_2}$  \\
        \hline\xrowht{0.75cm}
        2\,\, & $(J_3 \vee \overline{2J_2})^c$   & $P^*_4$~\cite{BrooksLinzLu} & $K_{3} \cup J_{2}$ & $K_{4} \cup J_{3}$ & $K_{5} \cup J_{4}$\\
        \hline\xrowht{0.75cm}
        3\,\, & $(J_5 \vee \overline{3J_2})^c$& $(K_{3} \cup J_{2})^c$ & $Q_3^*$~\cite{BrooksLinzLu} & $G_1$ & $G_2$ \\
        \hline\xrowht{0.75cm}
        4\,\, & $(J_7 \vee \overline{4J_2})^c$ & $(K_{4} \cup J_{3})^c$& $G_1^c$ & $G_4$ & $G_5$ \\
        \hline\xrowht{0.75cm}
        5\,\, & $(J_9 \vee \overline{5J_2})^c$  & $(K_{5} \cup J_{4})^c$ & $G_2^c$ & $G_5^c$ & $\frac{1}{2}(K \otimes H_8 + J_{16})$~\cite{BrooksLinzLu}\\
        \hline
    \end{tabular}
    \caption{Extremal graphs for $\alpha_{i,j}$.}
    \label{tab:graphsalpha}
\end{table}
\setlength{\tabcolsep}{6pt}

\begin{table}[h!]
    \centering
    \begin{tabular}{|c|c|}
        \hline
        Graph & sparse6 \\
        \hline\hline
        $G_1$ & \verb|:N@?KhBhOk`cJ?HAoAG\AfPGXbh_CqCi_COsIUq?QDMdZXBGSqTM|\\
        $G_2$ & \verb|:L@?KhQFQXYeN?HRF_COqOAGXCH_ChbgRGXAeMr|\\
        $G_4$ & \verb|:FehIA_t_S|\\
        $G_5$ & \verb|:S__@a`BaB`C_D_EFbCDEFG_@AHIaCEFGHJ`BDFGHJK_BCDEHJKL`ABCD|\\ &\verb|GILM`ABCEFILMN_ABDEFIKMO_@CDEGIJNP_ACDFGIKOQ_@BEFGIJPQR|\\
        \hline
    \end{tabular}
    \caption{Sparse6 representation for extremal graphs in Table \ref{tab:graphsalpha}.}
    \label{tab:sparse6alpha}
\end{table}

\newpage

\subsection{Tables for \texorpdfstring{$s_{i, j}$}{}}

Here are updated tables for the best known values of the spectral gap $s_{i, j}$.

\setlength{\tabcolsep}{2pt}

\begin{table}[h!]
    \centering
    \begin{tabular}{|C{0.8cm}||*{5}{C{3cm}|}}
        \hline
        \diagbox[innerleftsep=4pt, innerwidth = 20pt]{$i$}{$j$} & 0 & 1 & 2 & 3 & 4 \\
        \hline\hline
        0 & $2/\sqrt{3}$~\cite{BRTU2021+} & \mt{1.0902}{1.1124} & \mt{1.0664}{1.0774} & \mt{1.0524}{1.0590} & \mt{1.0434}{1.0477}\\
        \hline
        1 & $1/\sqrt{2}$ & \mt{0.6000}{0.6124} & \mt{0.5714}{0.5774} & \mt{0.5556}{0.5590} & \mt{0.5455}{0.5477}\\
        \hline
        2 & \mt{0.6000}{0.6123} & $1/2$ & \mt{0.4467}{0.4564} & \mt{0.4278}{0.4330} & \mt{0.4042}{0.4183}\\
        \hline
        3 & \mt{0.5714}{0.5774}  & \mt{0.4467}{0.4564} & \mt{0.4041}{0.4082} & \mt{0.3682}{0.3819} & \mt{0.3406}{0.3651}\\
        \hline
        4 & \mt{0.5556}{0.5590} & \mt{0.4219}{0.4330} & \mt{0.3678}{0.3819} & $\sqrt{2}/4$ & \mt{0.3149}{0.3354}\\
        \hline
    \end{tabular}
    \caption{Known lower and upper bounds for $s_{i,j}$.}
    \label{tab:bounds}
\end{table}

\begin{table}[h!]
    \centering
    \begin{tabular}{|C{0.8cm}||*{5}{C{3cm}|}}
        \hline
        \diagbox[innerleftsep=4pt, innerwidth = 20pt]{$i$}{$j$} & 0 & 1 & 2 & 3 & 4 \\
        \hline\hline\xrowht{0.75cm}
        0\,\, & $K_{3}^{(2)*}$ \cite{BRTU2021+}  & $K_{6}^{(3)*}$ & $K_{8}^{(4)*}$ & $K_{10}^{(5)*}$ & $K_{12}^{(6)*}$\\
        \hline\xrowht{0.75cm}
        1\,\, & $P^*_4$~\cite{BrooksLinzLu} & $K_{3} \cup J_{2}$ & $K_{4} \cup J_{3}$ & $K_{5} \cup J_{4}$ & $K_{6} \cup J_{5}$\\
        \hline\xrowht{0.75cm}
        2\,\, & $(K_{3} \cup J_{2})^c$ & $Q_3^*$~\cite{BrooksLinzLu} & $G_1$ & $G_2$ & $G_3$ \\
        \hline\xrowht{0.75cm}
        3\,\, & $(K_{4} \cup J_{3})^c$ & $G_1^c$ & $G_4$ & $G_5$ & $G_6$ \\
        \hline\xrowht{0.75cm}
        4\,\, & $(K_{5} \cup J_{4})^c$ & $G_2^c$ & $G_5^c$ & $\frac{1}{2}(K \otimes H_8 + J_{16})$~\cite{BrooksLinzLu} & $G_7$ \\
        \hline
    \end{tabular}
    \caption{Extremal graphs for $s_{i,j}$.}
    \label{tab:graphs}
\end{table}
\setlength{\tabcolsep}{6pt}

\begin{table}[h!]
    \centering
    \begin{tabular}{|c|c|}
        \hline
        Graph & sparse6 \\
        \hline\hline
        $G_1$ & \verb|:N@?KhBhOk`cJ?HAoAG\AfPGXbh_CqCi_COsIUq?QDMdZXBGSqTM|\\
        $G_2$ & \verb|:L@?KhQFQXYeN?HRF_COqOAGXCH_ChbgRGXAeMr|\\
        $G_3$ & \verb|:Oc?GgbaMGqOL?PbsIWyIDK\AXcIXATOAGXW@CKawAK\ATk_CXAiUq?PEMlbV^|\\
        $G_4$ & \verb|:FehIA_t_S|\\
        $G_5$ & \verb|:S__@a`BaB`C_D_EFbCDEFG_@AHIaCEFGHJ`BDFGHJK_BCDEHJKL`ABCD|\\ &\verb|GILM`ABCEFILMN_ABDEFIKMO_@CDEGIJNP_ACDFGIKOQ_@BEFGIJPQR|\\
        $G_6$ & \verb|:K@GKPT?QXAecOhxBGWyG@CLC?bGSqTOAG`RhV|\\
        $G_7$ & \verb|:J`?S@oBG[aDeOpwbJCPsHaOhc^|\\
        \hline
    \end{tabular}
    \caption{Sparse6 representation for extremal graphs in Table \ref{tab:graphs}.}
    \label{tab:sparse6s}
\end{table}
\newpage 

\subsection{Tables for \texorpdfstring{$\beta_{i, j}$}{}}

Here are the best known bounds for $\beta_{i, j}$. 
\setlength{\tabcolsep}{2pt}
\begin{table}[h!]
    \centering
    \begin{tabular}{|C{0.8cm}||*{5}{C{3cm}|}}
        \hline
        \diagbox[innerleftsep=4pt, innerwidth = 20pt]{$i$}{$j$} & 1 & 2 & 3 & 4 & 5 \\
        \hline\hline
       1&$-1/\sqrt{2}$&\mt{-0.6124}{-0.6000}&\mt{-0.5774}{-0.5714}&\mt{-0.5590}{-0.5556}&\mt{-0.5477}{-0.5455}\\
        \hline
        2&\mt{-0.6124}{-0.6000}&$-1/2$&\mt{-0.4564}{-0.4467}&\mt{-0.4330}{-0.4278}&\mt{-0.4183}{-0.4049}\\
        \hline
        3&\mt{-0.5774}{-0.5714}&\mt{-0.4564}{-0.4467}&\mt{-0.4082}{-0.4041}&\mt{-0.3819}{-0.3683}&\mt{-0.3651}{-0.3436}\\
        \hline
       4&\mt{-0.5590}{-0.5556}&\mt{-0.4330}{-0.4278}&\mt{-0.3819}{-0.3683}&$-\sqrt{2}/4$&\mt{-0.3354}{-0.3149}\\
        \hline
        5&\mt{-0.5477}{-0.5455}&\mt{-0.4183}{-0.4049}&\mt{-0.3651}{-0.3436}&\mt{-0.3354}{-0.3149}&\mt{-0.3162}{-0.3149}\\
        \hline
    \end{tabular}
    \caption{Known lower and upper bounds for $\beta_{i,j}$.}
    \label{tab:boundsbeta}
\end{table}
\begin{table}[h!]
    \centering
    \begin{tabular}{|C{0.8cm}||*{5}{C{3cm}|}}
        \hline
        \diagbox[innerleftsep=4pt, innerwidth = 20pt]{$i$}{$j$} & 1 & 2 & 3 & 4 & 5 \\
        \hline\hline\xrowht{0.75cm}
        1\,\, & $P^*_4$~\cite{BrooksLinzLu} & $(K_{3} \cup J_{2})^c$ & $(K_{4} \cup J_{3})^c$ & $(K_{5} \cup J_{4})^c$ & $(K_{6} \cup J_{5})^c$\\
        \hline\xrowht{0.75cm}
        2\,\, & $K_{3} \cup J_{2}$ & $Q_3^*$~\cite{BrooksLinzLu} & $G_1^c$ & $G_2^c$ & $G_3^c$\\
        \hline\xrowht{0.75cm}
        3\,\, & $K_{4} \cup J_{3}$ & $G_1$ & $G_4$ & $G_5^c$ & $G_6^c$ \\
        \hline\xrowht{0.75cm}
        4\,\, & $K_{5} \cup J_{4}$ & $G_2$ & $G_5$ & $\frac{1}{2}(K \otimes H_8 + J_{16})$~\cite{BrooksLinzLu} & $G_7^c$ \\
        \hline\xrowht{0.75cm}
        5\,\, & $K_{6} \cup J_{5}$ & $G_3$ &  $G_6$ & $G_7$ & $G_8$ \\
        \hline
    \end{tabular}
    \caption{Extremal graphs for $\beta_{i,j}$.}
    \label{tab:graphsbeta}
\end{table}
\begin{table}[h!]
    \centering
    \begin{tabular}{|c|c|}
        \hline
        Graph & sparse6 \\
        \hline\hline
        $G_1$ & \verb|:N@?KhBhOk`cJ?HAoAG\AfPGXbh_CqCi_COsIUq?QDMdZXBGSqTM|\\
        $G_2$ & \verb|:L@?KhQFQXYeN?HRF_COqOAGXCH_ChbgRGXAeMr|\\
        $G_3$ & \verb|:P?aAaB_@AB__DE_DEF`DEFG`DEFG`DGHI`DGHIJ`|\\
        & \verb|ABHIJK_ABCEF_ABCDEFL`ABCHIJKL`ABCIJKLMN|\\
        $G_4$ & \verb|:FehIA_t_S|\\
        $G_5$ & \verb|:S__@a`BaB`C_D_EFbCDEFG_@AHIaCEFGHJ`BDFGHJK_BCDEHJKL`ABCD|\\ &\verb|GILM`ABCEFILMN_ABDEFIKMO_@CDEGIJNP_ACDFGIKOQ_@BEFGIJPQR|\\
        $G_6$ & \verb|:KcA?hccUHApEOpwaIXCPBM`K@BGTK?cIXQ~|\\
        $G_7$ & \verb|:JbIOwbH?XGAGU?Pd_CPRF_COroAG`c~|\\
        $G_8$ & \verb|:JbIGgaF?XAoCKk@CN?HAf_CXbh_CPcI|\\
        \hline
    \end{tabular}
    \caption{Sparse6 representation for extremal graphs in Table \ref{tab:graphsbeta}.}
    \label{tab:sparse6beta}
\end{table}
\end{document}